\newcommand{\leg}[2]{\genfrac{(}{)}{}{}{#1}{#2}}
\newtheorem{theorem}{Theorem}
\newtheorem{lemma}[theorem]{Lemma}
\newtheorem{proposition}[theorem]{Proposition}
\theoremstyle{remark}
\newtheorem{definition}[theorem]{Definition}
\newtheorem{remarks}[theorem]{Remarks}
\newtheorem*{remark}{Remark}
\numberwithin{theorem}{section} \numberwithin{equation}{section}
\newcommand{\al}{\alpha}
\newcommand{\mfa}{\mathfrak{a}}
\newcommand{\ov}[1]{\overline{#1}}
\newcommand{\calD}{\mathcal{D}}
\newcommand{\calP}{\mathcal{P}}
\newcommand{\FF}{\mathcal{F}}
\newcommand{\om}{\omega}
\newcommand{\mfp}{\mathfrak{p}}
\newcommand{\mfb}{\mathfrak{b}}
\newcommand{\mfq}{\mathfrak{q}}
\newcommand{\mff}{\mathfrak{f}}
\newcommand{\mfc}{\mathfrak{c}}
\newcommand{\calL}{\mathcal{L}}
\newcommand{\calB}{\mathcal{B}}
\newcommand{\calF}{\mathcal{F}}
\newcommand{\F}{\mathbb{F}}
\newcommand{\Cl}{{\text {\rm Cl}}}
\newcommand{\rk}{{\text {\rm rk}}}
\newcommand{\Q}{\mathbb{Q}}
\newcommand{\Z}{\mathbb{Z}}
\newcommand{\LL}{{\mathcal L}}
\newcommand{\p}{\mathfrak p}
\newcommand{\q}{\mathfrak q}
\newcommand{\wK}{\widetilde{K}}
\renewcommand{\c}{\mathfrak c}
\newcommand{\N}{{\mathcal N}}
\newcommand{\textmod}{{\text {\rm mod}}}
\newcommand{\Gal}{{\text {\rm Gal}}}
\newcommand{\Disc}{\textnormal{Disc}}
\newcommand{\Ker}{\textnormal{Ker}}
\renewcommand{\b}{{\mathfrak b}}
\begin{document}
\title[Dirichlet Series Associated to Cubic Fields with Given Quadratic Resolvent]
{Dirichlet Series Associated to Cubic Fields with Given Quadratic Resolvent}

\author{Henri Cohen}
\address{Universit\'e Bordeaux I, Institut de Math\'ematiques, U.M.R. 5251 du C.N.R.S,
351 Cours de la Lib\'eration,
33405 TALENCE Cedex, FRANCE}
\email{Henri.Cohen@math.u-bordeaux1.fr}
\author{Frank Thorne}
\address{Department of Mathematics, University of South Carolina, 1523 Greene Street, Columbia, SC 29208, USA}
\email{thorne@math.sc.edu}

\begin{abstract}

Let $k$ be a quadratic field. We give an explicit formula for the Dirichlet
series $\sum_K|\Disc(K)|^{-s}$, where the sum is over isomorphism classes of
all cubic fields whose quadratic resolvent field is isomorphic to $k$.

Our work is a sequel to \cite{CM} (see also \cite{M}), where such formulas 
are proved in a more general setting, in terms of sums over characters of 
certain groups related to ray class groups. In the present paper we carry the
analysis further and prove explicit formulas for these Dirichlet series over 
$\Q$, and in a companion paper we do the same for quartic fields having a given
cubic resolvent.

As an application, we compute tables of the number of $S_3$-sextic
fields $E$ with $|\Disc(E)| < X$, for $X$ ranging up to $10^{23}$.
An accompanying PARI/GP implementation is available from the second author's website.

\end{abstract}

\maketitle
\section{Introduction}
A classical problem in algebraic number theory is that of
{\itshape enumerating number fields} by discriminant. Let $N^{\pm}_d(X)$ 
denote the number of isomorphism classes of number fields $K$ with 
$\deg(K) = d$ and $0 < \pm \Disc(K) < X$.
The quantity $N^{\pm}_d(X)$ has seen a great deal of study; see (for example) 
\cite{CDO, B_icm, T_four} for surveys of classical and more recent work.

It is widely believed that $N^{\pm}_d(X) = C^{\pm}_d X + o(X)$ for all 
$d\ge2$. For $d = 2$ this is classical, and the case $d = 3$ was proved in 
1971 work of Davenport and Heilbronn \cite{DH}. The cases $d = 4$ and $d = 5$
were proved much more recently by Bhargava \cite{B4, B5}. In addition, 
Bhargava in \cite{B_conj} also conjectured a value of the constants 
$C^{\pm}_{d,S_d}$ for $d > 5$, where the additional index $S_d$ means that 
one counts only degree $d$ number fields with Galois group of the Galois 
closure isomorphic to $S_d$.

Related questions have also seen recent attention. For example, Belabas 
\cite{Bel} developed and implemented a fast algorithm to compute large tables
of cubic fields, which has proved essential for subsequent numerical 
computations (including one to be carried out in this paper!) 
Based on Belabas's data, Roberts \cite{R} conjectured the 
existence of a secondary term of order $X^{5/6}$ in $N^{\pm}_3(X)$ and this 
was proved (independently, and using different methods) by Bhargava, Shankar,
and Tsimerman \cite{BST}, and by Taniguchi and the second author \cite{TT}. 
Further details and references can be found in the survey papers above.

In the present paper we study cubic fields from a different angle. In 1954 
Cohn \cite{Cohn} studied {\itshape cyclic} cubic fields and proved that
\begin{equation}\label{eqn_cohn}
\sum_{K \ \textnormal{cyclic}} \frac{1}{\Disc(K)^s} = - \frac{1}{2} + \frac{1}{2} \bigg( 1 + \frac{1}{3^{4s}} \bigg) \prod_{p \equiv 1 \pmod 6}
\bigg(1 + \frac{2}{p^{2s}}\bigg)\;.
\end{equation}
To formulate a related question for noncyclic fields, fix a fundamental 
discriminant $D$. Given a noncyclic cubic field $K$, its Galois closure 
$\widetilde{K}$ has Galois group $S_3$ and hence contains a unique quadratic
subfield $k$, called the {\itshape quadratic resolvent}. For a fixed $D$, let
$\calF(\Q(\sqrt{D}))$ be the set of all cubic fields $K$ whose quadratic 
resolvent field is $\Q(\sqrt{D})$. For any $K \in \calF(\Q(\sqrt{D}))$ we have
$\Disc(K) = D f(K)^2$ for some positive integer $f(K)$, and we define 
\begin{equation}
\Phi_D(s) := \frac{1}{2} + \sum_{K \in \calF(\Q(\sqrt{D}))} \frac{1}{f(K)^s}\;,
\end{equation}
where the constant $1/2$ is added to simplify the final formulas.

Motivated by Cohn's formula \eqref{eqn_cohn}, we may ask if $\Phi_D(s)$ can 
be given an explicit form.

The answer is yes, as was essentially shown by A.~Morra and the first author 
in \cite{CM}, using Kummer theory. They proved a very general formula 
enumerating relative cubic extensions of any base field. However, this 
formula is rather complicated, and it is not in a form which is immediately
conducive to applications. In the present paper, we will show that this 
formula can be put in such a form when the base field is $\Q$; our formula 
(Theorem \ref{thm_main_cubic}) is similar to \eqref{eqn_cohn} but involves 
one additional Euler product for each cubic field of discriminant $-D/3$, 
$-3D$, or $-27D$.

\subsection{An application}
One application of our result is to enumerating $S_3$-sextic field extensions, i.e., sextic
field extensions $\wK$ which are Galois over $\Q$ with Galois group $S_3$. Suppose that $\wK$ is such a field,
where
$K$ and $k$ are the cubic and quadratic subfields respectively, the former being defined only up to isomorphism.
Then $k$ is the quadratic resolvent of $K$, and in addition to the formula $\Disc(K) = \Disc(k) f(K)^2$ we have
\begin{equation}
\Disc(\wK) = \Disc(K)^2 \Disc(k) = \Disc(k)^3 f(K)^4.
\end{equation}
so that our formulas may be used to count all such $\wK$ of bounded discriminant, starting from Belabas's tables \cite{Bel}
of cubic fields. Ours is not the only way to enumerate such $\wK$, but it is straightforward to implement and 
it seems to be (roughly) the most efficient. 

We implemented this algorithm using PARI/GP \cite{pari} to 
compute counts of $S_3$-sextic $\wK$ with $|\Disc(\wK)| < 10^{23}$. 
In Section \ref{sec_computations} we present our data, and the accompanying code is available from the second author's
website.
\\
\\
{\bf Outline of the paper.}
In Section \ref{sec_cubic_intro} we introduce our notation and give the main 
results. In Section \ref{sec_cubic_prelim} we summarize the work of Morra and
the first author \cite{CM}, and prove several propositions which will be 
needed for the proof of the main result. Our work relies heavily on work of 
Ohno \cite{Ohno} and Nakagawa \cite{N}, establishing an identity for binary 
cubic forms. In the same section, we give a result (Proposition \ref{case22})
which controls the splitting type of the prime $3$ in certain cubic 
extensions, and illustrates an application of Theorem \ref{thm_main_cubic}. 
Finally, in Section \ref{sec_cm} we prove Theorem \ref{thm_main_cubic}, using
the main theorem of \cite{CM}, recalled as Theorem \ref{theorem61}, as a 
starting point. In Section \ref{sec_examples} we give some
numerical examples which were helpful in double-checking our results, and in
Section \ref{sec_computations} we describe our computation of $S_3$-sextic fields.

\section*{Acknowledgments}
The authors would like to thank Karim Belabas, Franz Lemmermeyer,
Guillermo Mantilla-Soler, and Simon Rubinstein-Salzedo,
among many others, for helpful discussions related to the topic of this paper. We would especially like
to thank the anonymous referee of \cite{TT}, who (indirectly) suggested the application to counting
$S_3$-sextic fields.


\section{Statement of Results}\label{sec_cubic_intro}

We begin by introducing some notation. In what follows, by abuse of language 
we use the term ``cubic field'' to mean 
``isomorphism class of cubic number fields''.

\begin{definition}\label{def_omegal} 
Let $E$ be a cubic field. For a prime number $p$ we set
$$\om_E(p)=\begin{cases}
-1&\text{\quad if $p$ is inert in $E$\;,}\\
2&\text{\quad if $p$ is totally split in $E$\;,}\\ 
0&\text{\quad otherwise.}
\end{cases}$$
\end{definition}

\begin{remarks}\hfill

\begin{enumerate}
\item 
We have 
$\omega_E(p) = \chi(\sigma_p)$, where 
$\chi$ is the character of the standard representation of $S_3$, and 
$\sigma_p$ is the Frobenius element of $E$ at $p$.
\item
Note that we have $\om_E(p)=0$ if and only if $\leg{\Disc(E)}{p}\ne1$, and 
since in all cases that we will use we have $\Disc(E)=-D/3$, $-3D$, or $-27D$
for some fundamental discriminant $D$, for $p\ne3$ this is true if and only
if $\leg{-3D}{p}\ne1$. Thus, in Euler products involving the quantities
$1+\om_E(p)/p^s$ we can either include all $p\ne3$, or restrict to
$\leg{-3D}{p}=1$.
\end{enumerate}
\end{remarks}
\begin{definition}
\hfill
\begin{enumerate}\item Let $D$ be a fundamental discriminant
(including $1$). We let $D^*$ be the discriminant of the 
{\itshape mirror field} $\Q(\sqrt{-3D})$, so that $D^* = -3D$ if $3 \nmid D$ 
and $D^* = -D/3$ if $3 | D$.
\item For any fundamental discriminant $D$ we denote by $\rk_3(D)$ the
$3$-rank of the class group of the field $\Q(\sqrt{D})$.
\item For any integer $N$ we let $\LL_N$ be the set of cubic fields of 
discriminant $N$. We will use the notation $\LL_N$ only for $N=D^*$ or 
$N=-27D$, with $D$ a fundamental discriminant.
\item If $K_2=\Q(\sqrt{D})$ with $D$ fundamental we denote by $\FF(K_2)$ the
set of cubic fields with resolvent field equal to $K_2$, or equivalently, 
with discriminant of the form $Df^2$.
\item With a slight abuse of notation, we let
$$\LL_3(K_2)=\LL_3(D)=\LL_{D^*}\cup\LL_{-27D}.$$
\end{enumerate}\end{definition}

\begin{remark}
Scholz's theorem tells us that for $D<0$ we have
$0\le \rk_3(D)-\rk_3(D^*)\le1$ (or equivalently that for $D>0$ we have
$0\le \rk_3(D^*)-\rk_3(D)\le1$), and gives also a necessary and sufficient
condition for $\rk_3(D)=\rk_3(D^*)$ in terms of the fundamental unit of
the real field.
\end{remark}

\begin{definition}
As in the introduction, for any fundamental discriminant $D$ we define the
Dirichlet series
\begin{equation}
\Phi_D(s) := \frac{1}{2} + \sum_{K \in \calF(\Q(\sqrt{D}))} \frac{1}{f(K)^s}.
\end{equation}
\end{definition}

Our main theorem is as follows:

\begin{theorem}\label{thm_main_cubic} For any fundamental discriminant $D$ we 
have
\begin{equation}\label{eqn_main_cubic}
c_D\Phi_D(s)=\dfrac{1}{2}M_1(s)\prod_{\leg{-3D}{p}=1}\left(1+\dfrac{2}{p^s}\right)
+\sum_{E\in\LL_3(D)}M_{2,E}(s)\prod_{\leg{-3D}{p}=1}\left(1+\dfrac{\om_E(p)}{p^s}\right)\;,\end{equation}
where $c_D=1$ if $D = 1$ or $D < -3$, $c_D=3$ if $D = -3$ or $D > 1$, and
the $3$-Euler factors $M_1(s)$ and $M_{2,E}(s)$ are given in
the following table.
\end{theorem}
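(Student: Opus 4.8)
The plan is to start from the general formula of Morra and the first author, recalled as Theorem~\ref{theorem61}, which expresses $\Phi_D(s)$ as a sum over characters $\chi$ of certain ray class groups $\Cl_{\mathfrak{m}}(k)/\Cl_{\mathfrak{m}}(k)^3$ (for a suitable conductor $\mathfrak{m}$ supported at $3$) of an Euler product whose $p$-factors for $p \neq 3$ encode the splitting of $p$ in the cubic field $K_\chi$ attached to $\chi$ by class field theory / Kummer theory over $k(\zeta_3)$. The first step is purely $p$-adic and local: for each rational prime $p \neq 3$, I would compute the contribution of $p$ to the character sum. Using the reflection/Scholz setup, the nontrivial characters $\chi$ correspond (via the Ohno--Nakagawa reflection identity, which the excerpt flags as the crucial input) to cubic fields $E$ of discriminant $D^*$ or $-27D$, and I would show that, after grouping characters by the field $E$ they produce, the local factor at $p$ collapses to exactly $1 + \omega_E(p)/p^s$, while the trivial character yields $1 + 2/p^s$. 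This is where the definition of $\omega_E$ and the first Remark (identifying $\omega_E(p)$ with the character value $\chi(\sigma_p)$ of the standard representation of $S_3$) do the real work: one checks case by case on the splitting type of $p$ in $k$ and in the $\Z/3\Z$-extension that the Artin/Frobenius symbol matches.

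Next I would handle the prime $3$ separately, since the conductor $\mathfrak{m}$ is supported there and the local representation theory at $3$ is genuinely more delicate (ramification, the role of $\zeta_3$, and the distinction between the cases $3 \mid D$ and $3 \nmid D$). The strategy is to enumerate the possible local conditions at $3$ — equivalently, the structure of the relevant local ray class group at $3$ and the admissible conductors — and to compute the corresponding finite Euler factor in each case. These are exactly the quantities $M_1(s)$ and $M_{2,E}(s)$ appearing in the table, and they depend only on the splitting type of $3$ in $E$ together with $D \bmod 9$ (or the analogous finite datum), so the computation, while lengthy, is a finite case check. I would organize it by the splitting behavior of $3$, invoking Proposition~\ref{case22} (stated earlier as controlling the splitting of $3$ in these cubic extensions) to pin down which cases actually occur and to evaluate the local conductor exponents.

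Finally I would assemble the pieces: multiply the global Euler product $\prod_{p \neq 3}$ (already shown to be $1 + 2/p^s$ or $1 + \omega_E(p)/p^s$) by the $3$-factor, sum over $E \in \LL_3(D)$ with the correct multiplicities, account for the normalizing constant $c_D$ (which tracks whether $\pm 1$ or the cube roots of unity act on the character set, hence the factor $3$ when $D = -3$ or $D > 1$), and check that the trivial-character term contributes the $\tfrac12 M_1(s)\prod(1 + 2/p^s)$ summand with the stated $1/2$. The constant $1/2$ built into the definition of $\Phi_D(s)$ and the extra $1/2$ in front of $M_1$ are bookkeeping devices that make the trivial character fit the same template as the others; verifying this bookkeeping is routine once the local computations are in hand.

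The main obstacle I anticipate is the prime $3$. Away from $3$ everything is governed cleanly by Frobenius and the reflection theorem, and the identification with $\omega_E(p)$ is essentially formal. But at $3$ the interaction between the ramification of $k/\Q$, the presence (or not) of $\zeta_3$, the conductor of the Kummer extension, and the resulting finite Euler factor requires a careful and somewhat intricate case analysis — distinguishing $3 \mid D$ from $3 \nmid D$, tracking $D \bmod 9$, and correctly reading off the local conductor exponents — and this is precisely the step where the formula of \cite{CM} is ``not in a form conducive to applications'': turning its abstract character sum into the explicit finite factors of the table is the technical heart of the proof.
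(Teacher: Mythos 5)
Your outline follows the same route as the paper: start from Theorem \ref{theorem61}, attach a cubic field $E$ to each pair of nontrivial characters $(\chi,\ov{\chi})$ of $G_{\mfb}$, identify $\om_\chi(p)$ with $\om_E(p)$ for $p\ne 3$ via Frobenius, and obtain the $3$-factors $M_1(s)$, $M_{2,E}(s)$ by a case analysis on $D\bmod 9$. But there are two genuine problems. First, you propose to invoke Proposition \ref{case22} to pin down the splitting of $3$; that proposition is an \emph{application} of Theorem \ref{thm_main_cubic} (its proof in the case $D\equiv6\pmod 9$ explicitly uses the theorem), so using it here is circular. It is also unnecessary: the theorem's table only requires the identity $\om_\chi(3)=\om_E(3)$, which follows from the same class-field-theoretic argument as for $p\ne3$, not a census of how many $E$ have each splitting type. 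Relatedly, the entries $M_{2,E}(s)$ do not come from a local conductor computation at $3$ so much as from bookkeeping: one determines for which moduli $\mfb\in\calB$ a given field $E$ arises from a character of $G_{\mfb}$ (fields of discriminant $D^*$ arise for several $\mfb$, fields of discriminant $-27D$ only for the larger ones), and sums the corresponding coefficients $A_{\mfb}(s)$.

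Second, you describe the character--field correspondence as going ``via the Ohno--Nakagawa reflection identity,'' but the construction itself is pure class field theory (extend $\chi$ to $G'_{\mfb}$, take the fixed field of its kernel, show the resulting $E_1$ has group $S_3\times C_2$ over $\Q$, and extract $E$). What this gives for free is only that each character pair yields \emph{some} $E$ with quadratic resolvent $\Q(\sqrt{-3D})$; one must still (i) bound $\Disc(E)$ via the conductor-discriminant arithmetic of \eqref{eqn_disce}--\eqref{eqn_disce2} to rule out $\Disc(E)=-243D$ and to see which $\mfb$ can produce which discriminants, and (ii) prove the map is a \emph{bijection} onto $\LL_{D^*}$ or $\LL_3(D)$ as appropriate, so that each $E$ is hit exactly once per relevant $\mfb$. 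Step (ii) is where the Ohno--Nakagawa input actually enters, through the counting identities of Propositions \ref{prop_g_size}, \ref{prop_count_cf} and \ref{prop_no}: without matching $|G_{\mfb}|$ against $|\LL_{D^*}|$ and $|\LL_3(D)|$ you cannot conclude that every field appears, nor with the multiplicity needed to read off the stated $M_{2,E}(s)$. Your proposal gestures at this but does not isolate it as the step requiring proof, and as written the surjectivity and multiplicity claims are unsupported.
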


\bigskip

\centerline{
\begin{tabular}{|c||c|c|c|}
\hline
Condition on $D$ & $M_1(s)$ & $M_{2,E}(s),\ E\in\LL_{D^*}$ & $M_{2,E}(s),\ E\in\LL_{-27D}$\\
\hline\hline
$3\nmid D$       & $1+2/3^{2s}$ & $1+2/3^{2s}$        & $1-1/3^{2s}$\\
\hline
$D\equiv3\pmod9$ & $1+2/3^s$    & $1+2/3^s$           & $1-1/3^s$\\
\hline
$D\equiv6\pmod9$ & $1+2/3^s+6/3^{2s}$ & $1+2/3^s+3\om_E(3)/3^{2s}$ & $1-1/3^s$\\
\hline
\end{tabular}}

\bigskip

\begin{remarks}\hfill
\begin{enumerate}\item When $D\equiv3\pmod9$ we have 
$D^*\equiv2\pmod3$, so $3$ is partially split in any cubic field of 
discriminant $D^*$. It follows that when $E\in\LL_{D^*}$ we have 
$M_{2,E}(s)=1+2/3^s+3\om_E(3)/3^{2s}$ for all $D$ such that $3\mid D$.
\item When $3\nmid D$ there are no terms for $1/3^s$, in accordance with 
Proposition \ref{prop_disc_vals} below.
\item
In the terms involving $E\in\LL_{D^*}$ the condition $\leg{-3D}{p}=1$ can be
replaced by $p\ne3$ and even omitted altogether if $3\nmid D$, and in 
the terms involving $E\in\LL_{-27D}$ it can be omitted. 
\item
The case $D = 1$ is the formula \eqref{eqn_cohn} of Cohn mentioned previously,
and the case $D = -3$ was proved by Morra and the first author \cite{CM}. 
The paper \cite{CM} also proves \eqref{eqn_main_cubic} when $D < 0$ and 
$3 \nmid h(D)$, in which case $\calL_3(D) = \emptyset$. In her thesis 
\cite{M}, Morra also proves some special cases of an analogue of 
\eqref{eqn_main_cubic} for cubic extensions of imaginary quadratic fields.
Finally, one additional case of \eqref{eqn_main_cubic}
was proved in \cite{T_no_ep}, with an application to Shintani zeta functions.
\end{enumerate}
\end{remarks}

\section{Preliminaries}\label{sec_cubic_prelim}
We briefly summarize the work of \cite{CM}, and introduce some further 
notation which will be needed in the proof. We assume from now on that 
$D \neq 1, -3$; these cases are similar but simpler, and are already handled
in \cite{CM, M} (and the case $D = 1$ in \cite{Cohn}).

Suppose that $K/\Q$ is a cubic field of discriminant $D n^2$, where 
$D \not \in \{ 1, -3 \}$ is a fundamental discriminant, and 
let $N$ be the Galois closure of $K$. Then $N(\sqrt{-3})$ is a cyclic cubic 
extension of $L := \Q(\sqrt{D}, \sqrt{-3})$, and Kummer theory implies that
$N(\sqrt{-3}) = L(\al^{1/3})$ for some $\al \in L$. We write (following 
\cite[Remark 2.2]{CM})
\begin{equation}\label{eq:glq}
\Gal(L/\Q) = \{1, \ \tau, \ \tau_2, \ \tau \tau_2 \},
\end{equation}
where $\tau, \ \tau_2, \ \tau \tau_2$ fix $\sqrt{D}, \ \sqrt{-3}, \ \sqrt{-3D}$ 
respectively.

The starting point of \cite{CM} is a correspondence between such fields $K$
and such elements $\al$. In particular, isomorphism classes of such $K$ are 
in bijection with equivalence classes of elements 
$1 \neq \overline{\al} \in L^{\times}/(L^{\times})^3$,
with $\al$ identified with its inverse, such that 
$\al \tau'(\al) \in (L^{\times})^3$ for $\tau' \in \{ \tau, \tau_2 \}$. 
We express this by writing (as in \cite[Definition 2.3]{CM})
$\overline{\al} \in (L^{\times}/(L^{\times})^3)[T]$, where
$T \subseteq \F_3[\Gal(L/\Q)]$ is defined by $T = \{ \tau + 1, \tau_2 + 1 \}$,
and the notation $[T]$ means that $\overline{\al}$ is annihilated by $T$.

To go further, we introduce the following definition:

\begin{definition}\label{defsel} Let $k$ be a number field and $\ell$ be a
prime.
\begin{enumerate}\item We say that an element $\al\in k^*$ is an $\ell$-virtual
unit if $\al\Z_k=\q^\ell$ for some ideal $\q$ of $k$, or equivalently, if
$v_{\p}(\al)\equiv0\pmod{\ell}$ for all prime ideals $\p$, and we denote by
$V_{\ell}(k)$ the group of $\ell$-virtual units.
\item We define the $\ell$-Selmer group $S_{\ell}(k)$ of $k$ as 
$S_{\ell}(k)=V_{\ell}(k)/{k^*}^{\ell}$.
\end{enumerate}\end{definition}

Using this definition, it is immediate to see that the bijection described
above induces a bijection between fields $K$ as above, and triples
$(\mfa_0, \mfa_1, \overline{u})$ (up to equivalence with the triple
$(\mfa_1, \mfa_0, 1/\overline{u})$), satisfying the following:

\begin{enumerate}
\item $\mfa_0$ and $\mfa_1$ are coprime integral squarefree ideals of $L$ 
such that $\mfa_0 \mfa_1^2 \in (I/I^3)[T]$ (where $I$ is the group of 
fractional ideals of $L$), and $\overline{\mfa_0 \mfa_1^2} \in \Cl(L)^3$.
\item $\overline{u}\in S_3(L)[T]$, and $\overline{u}\ne1$ if
$\mfa_0=\mfa_1=\Z_L$.
\end{enumerate}

Indeed, given $\al$ such that $N(\sqrt{-3}) = L(\al^{1/3})$, we can write
uniquely $\al=\mfa_0\mfa_1^2\mfq^3$ with $\mfa_0$ and $\mfa_1$ coprime integral
squarefree ideals, and since 
$\overline{\al} \in (L^{\times}/(L^{\times})^3)[T]$ and the ideal class of
$\mfa_0\mfa_1^2$ is equal to that of $\mfq^{-3}$, the conditions on the
ideals are satisfied. Conversely, given a triple as above, we write 
$\mfa_0 \mfa_1^2 \mfq_0^3 = \al_0 \Z_L$ for some 
$\al_0 \in (L^* / (L^*)^3)[T]$ and some ideal $\mfq_0$. Then $K$ is the cubic
subextension of $L(\sqrt[3]{\al_0 u})$, for any lift $u$ of $\overline{u}$.

\smallskip

It is easy to see that $\mfa_0 \mfa_1 = \mfa_{\al} \Z_L$ for some ideal 
$\mfa_{\al}$ of $\Q(\sqrt{D})$, and the conductor
$\mff(K(\sqrt{D})/\Q(\sqrt{D}))$ is equal to $\mfa_{\al}$ apart from a
complicated $3$-adic factor. Furthermore,
$\mff(K(\sqrt{D})/\Q(\sqrt{D})) = f(K) \Z_{\Q(\sqrt{D})}$, and the Dirichlet
series for $\Phi_D(s)$ consists of a sum involving the norms of ideals 
$\mfa_0$ and $\mfa_1$ satisfying the conditions above. The condition 
$\overline{\mfa_0 \mfa_1^2} \in \Cl(L)^3$ may be detected by summing over 
characters of $\Cl(L)/\Cl(L)^3$, suggesting that cubic fields $K$ can be 
counted in terms of unramified abelian cubic extensions of $L$.

Due to the $3$-adic complications, the formula (Theorem 6.1 of \cite{CM}) in 
fact involves a sum over characters of the group
\begin{equation}\label{def:gb}
G_{\mfb} := \frac{\Cl_{\mfb}(L)}{(\Cl_{\mfb}(L))^3}[T]
\end{equation}
for $\mfb \in \calB:=\{ (1), (\sqrt{-3}), (3), (3 \sqrt{-3}) \}$.
More precisely, in the case considered here where the base field is $\Q$,
Theorem 6.1 of \cite{CM} specializes to the following 
(see also \cite{M})\footnote{Note that we slightly changed the definition of $F(\mfb,\chi,s)$ given in \cite{CM} when $\mfb=(1)$ and $3\mid D$.}:

\begin{theorem}\label{theorem61} If $D\notin\{ 1, -3\}$ we have
$$\Phi_D(s)=\dfrac{3}{2c_D}\sum_{\mfb\in\calB}A_{\mfb}(s)\sum_{\chi\in\widehat{G_{\mfb}}}\om_{\chi}(3)F(\mfb,\chi,s)\;,$$
where $c_D=1$ if $D<0$, $c_D=3$ if $D>0$,
the $A_{\b}(s)$ are given by the following table,

\medskip

\centerline{
\begin{tabular}{|c||c|c|c|c|}
\hline
Condition on $D$ & $A_{(1)}(s)$ & $A_{(\sqrt{-3})}(s)$ & $A_{(3)}(s)$ & $A_{(3\sqrt{-3})}(s)$\\
\hline\hline
$3\nmid D$       & $3^{-2s}$ & $0$        & $-3^{-2s-1}$ & $1/3$\\
\hline
$D\equiv3\pmod9$ & $0$    & $3^{-3s/2}$   & $3^{-s}-3^{-3s/2}$ & $(1-3^{-s})/3$\\
\hline
$D\equiv6\pmod9$ & $3^{-2s}$ & $3^{-3s/2}$ & $3^{-s}-3^{-3s/2}$ & $(1-3^{-s})/3$\\
\hline
\end{tabular}}

\smallskip
$$F(\mfb,\chi,s)=\prod_{\leg{-3D}{p}=1}\left(1+\dfrac{\om_{\chi}(p)}{p^s}\right)\;,$$
where if we write $p\Z_L=\mfc\tau(\mfc)$ (with $\mfc$ not necessarily prime),
we set\footnote{Note that this fixes a small mistake in 
the statement of Theorem 6.1 of \cite{CM}, where the condition is 
described as $\chi(\mfc) = \chi(\tau'(\mfc))$. The conditions are equivalent 
whenever $p$ is a cube modulo $\mfb$; if $\mfb = (3 \sqrt{-3})$ and 
$p \not \equiv \pm 1 \ (\textmod \ 9)$, then $p$ and $p \tau'(p)$ are not 
cubes in $\Cl_{\mfb}(L)$ for $\tau' \in \{\tau, \tau_2\}$, and so the class 
of $p$ is not in $G_{\mfb}$.} for $p\ne3$:
$$\om_{\chi}(p)=\begin{cases}
2&\text{\quad if $\chi(\tau(\c)/\c) = 1$}\;,\\
-1&\text{\quad if $\chi(\tau(\c)/\c) \neq 1$}\;,\end{cases}$$
and for $p=3$:
$$\om_{\chi}(3)=\begin{cases}
1&\text{\quad if $\mfb\ne(1)$ or $\mfb=(1)$ and $3\nmid D$}\;,\\
2&\text{\quad if $\mfb=(1)$, $3\mid D$, and $\chi(\tau(\c)/\c) = 1$}\;,\\
-1&\text{\quad if $\mfb=(1)$, $3\mid D$, and $\chi(\tau(\c)/\c) \neq 1$}\;.\end{cases}$$
\end{theorem}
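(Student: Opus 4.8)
The plan is to derive Theorem~\ref{theorem61} by specializing Theorem~6.1 of \cite{CM} to the base field $k=\Q$, making explicit the local computations that the general statement leaves implicit and recording the two small normalization changes flagged in the footnotes. Recall that \cite{CM} enumerates cubic extensions with prescribed quadratic resolvent through the correspondence recalled above: cubic fields $K\in\FF(\Q(\sqrt D))$ biject, up to the equivalence $(\mfa_0,\mfa_1,\overline u)\sim(\mfa_1,\mfa_0,\overline u^{-1})$, with triples $(\mfa_0,\mfa_1,\overline u)$ satisfying $\mfa_0\mfa_1^2\in(I/I^3)[T]$, $\overline{\mfa_0\mfa_1^2}\in\Cl(L)^3$ and $\overline u\in S_3(L)[T]$ (with $\overline u\ne1$ when $\mfa_0=\mfa_1=\Z_L$), where $L=\Q(\sqrt D,\sqrt{-3})$; once the class-group condition is detected by characters and the wild behaviour at $3$ is handled, the resulting Dirichlet series becomes a sum over a finite set of moduli $\mfb$ at $3$ of a character sum over $\widehat{G_{\mfb}}$ of an Euler product over the rational primes. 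The first step is thus to instantiate everything for $\Q$: one takes $L=\Q(\sqrt D,\sqrt{-3})$, $\Gal(L/\Q)=\{1,\tau,\tau_2,\tau\tau_2\}$ and $T=\{\tau+1,\tau_2+1\}$ as in \eqref{eq:glq}, and checks that the moduli $\mfb$ that occur are exactly $\calB=\{(1),(\sqrt{-3}),(3),(3\sqrt{-3})\}$, the four successive powers $(\sqrt{-3})^j$, $0\le j\le3$.

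Next I would compute the Euler factor at a prime $p\ne3$. For such $p$ the only relevant case is $\leg{-3D}{p}=1$, equivalently $p$ split in $\Q(\sqrt{-3D})$, equivalently the decomposition group of $p$ in $\Gal(L/\Q)$ contained in $\langle\tau\tau_2\rangle$; then $p\Z_L=\mfc\,\tau(\mfc)$ with $\mfc$ and $\tau(\mfc)$ coprime. The admissible local data at $p$ are $(\mfa_0,\mfa_1)\in\{(\Z_L,\Z_L),(\mfc,\tau(\mfc)),(\tau(\mfc),\mfc)\}$, these being the only squarefree coprime pairs supported at $p$ whose product $\mfa_0\mfa_1^2$ lies in $(I/I^3)[T]$: the trivial pair contributes $1$ to the local factor, and each of the other two contributes $p^{-s}$ weighted by $\chi(\tau(\mfc)/\mfc)^{\pm1}$. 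Summing gives $1+\bigl(\chi(\tau(\mfc)/\mfc)+\chi(\tau(\mfc)/\mfc)^{-1}\bigr)p^{-s}=1+\om_{\chi}(p)p^{-s}$, with $\om_{\chi}(p)=2$ if $\chi(\tau(\mfc)/\mfc)=1$ and $\om_{\chi}(p)=-1$ otherwise, which is the local factor of $F(\mfb,\chi,s)$ at $p$. Here I would also settle the first footnote: the condition should be the genuine triviality of $\tau(\mfc)/\mfc$ in $G_{\mfb}$ rather than the formally weaker $\chi(\mfc)=\chi(\tau'(\mfc))$ printed in \cite{CM}; the two agree whenever the class of $p$ is a cube modulo $\mfb$, and when this fails---only for $\mfb=(3\sqrt{-3})$ and $p\not\equiv\pm1\pmod9$---the classes of $p$ and $p\,\tau'(p)$ are not cubes in $\Cl_{\mfb}(L)$, so the class of $p$ does not lie in $G_{\mfb}$ and contributes nothing.

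The substantive part---and the step I expect to be the main obstacle---is the analysis at $p=3$, which simultaneously produces the table of coefficients $A_{\mfb}(s)$ and the three-case definition of $\om_{\chi}(3)$. Here one must treat separately the cases $3\nmid D$, $D\equiv3\pmod9$ and $D\equiv6\pmod9$, working out in each the ramification of $3$ in $L/\Q(\sqrt D)$ and in the cubic extensions at hand, the possible $3$-parts of the conductor $\mff(K(\sqrt D)/\Q(\sqrt D))$ together with the exponent of $3$ they force in $f(K)$, and the way the ray-class characters modulo each $\mfb$ intervene; the entries of the $A_{\mfb}(s)$ table are precisely the generating series packaging these $3$-adic contributions. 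In the one case where $3$ ramifies only ``partially wildly'' on the Kummer side, namely $\mfb=(1)$ with $3\mid D$, one must allow $\om_{\chi}(3)$ to take the extra values $2$ and $-1$; this is exactly the renormalization of $F((1),\chi,s)$ announced in the second footnote. Once all local contributions are assembled, what remains is to match the global constants: the $\tfrac12$ already built into $\Phi_D(s)$ absorbs the equivalence $(\mfa_0,\mfa_1,\overline u)\sim(\mfa_1,\mfa_0,\overline u^{-1})$, and the prefactor $\tfrac{3}{2c_D}$, with $c_D=1$ for $D<0$ and $c_D=3$ for $D>0$, is read off from the constant in Theorem~6.1 of \cite{CM} specialized to $k=\Q$, the dependence on the sign of $D$ reflecting the different unit rank of $L$ in the imaginary and real cases. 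Collecting everything and comparing with the two tables gives the stated identity.
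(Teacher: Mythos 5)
Your overall strategy---specialize Theorem 6.1 of \cite{CM} to the base field $\Q$---is the same as the paper's, and your computation of the Euler factor at a prime $p\ne 3$ with $\leg{-3D}{p}=1$ (the three admissible pairs $(\Z_L,\Z_L)$, $(\mfc,\tau(\mfc))$, $(\tau(\mfc),\mfc)$ giving $1+(\chi(\tau(\mfc)/\mfc)+\chi(\tau(\mfc)/\mfc)^{-1})p^{-s}$) is correct and consistent with Lemma \ref{lemomchi}, as is your handling of the footnoted correction to the splitting condition. But there is a genuine gap: the actual content of the theorem is the table of the $A_{\mfb}(s)$, the identification of $\calB$, and the constant $3/(2c_D)$, and none of these is derived. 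You explicitly defer the analysis at $p=3$ (``the step I expect to be the main obstacle''), describing only that one ``must work out'' the ramification and conductor exponents in each of the three congruence classes of $D$ and that the table ``packages'' the result. That is a restatement of the problem, not a proof; without producing the specific entries $3^{-2s}$, $0$, $-3^{-2s-1}$, $1/3$, etc., and verifying that $A_{(\sqrt{-3})}(s)$ vanishes when $3\nmid D$ and $A_{(1)}(s)$ vanishes when $D\equiv 3\pmod 9$, nothing has been established.

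The paper does not redo the $3$-adic Kummer-theoretic analysis at all. Its proof is a bookkeeping exercise: each factor appearing in the general formula of \cite{CM} is evaluated for $k=\Q$ by citing the relevant definition or lemma of \cite{CM} (Definition 3.6 for $\calP_3$, Definition 4.4 for $\lceil\N(\mfb)\rceil$ and $\N(\mathfrak{r}^e(\mfb))$, Proposition 2.10 for $\mathfrak{d}_3$ and the condition $\mathfrak{r}^e(\mfb)\mid\mathfrak{d}_3$, Definition 4.5 for $P_{\mfb}(s)$, Lemma 5.4 for $|(U/U^3)[T]|=3/c_D$, Lemma 5.6 for $|(Z_{\mfb}/Z_{\mfb}^3)[T]|$), and the table entries are the products of these explicitly recorded quantities; two misprints in \cite{CM} also have to be identified and corrected along the way. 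Your route, by contrast, amounts to re-proving Theorem 6.1 of \cite{CM} from the triple correspondence before specializing it, which is a substantially larger undertaking---and it is precisely the part you have not carried out. To repair the proposal, either perform the local computation at $3$ in each case in enough detail to reproduce every entry of the table (including the zero entries and the two-term entries $3^{-s}-3^{-3s/2}$ and $(1-3^{-s})/3$), or replace that portion of the argument by the explicit evaluation of the constituents of the formula of \cite{CM}, as the paper does.
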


\begin{proof} We briefly explain how this follows from Theorem 6.1 of
\cite{CM}. Warning: in the present proof we use the notation of \cite{CM} 
which conflicts somewhat with that of the present paper. All the
definition, proposition, and theorem numbers are those of \cite{CM}.

\begin{itemize}
\item We have $k=\Q$ so $[k:\Q]=1$, so $3^{(3/2)[k:\Q]s}=3^{3s/2}$.
\item By Definition 3.6 we have ${\calP}_3=\{3\}$ if $3\nmid D$ and 
$\emptyset$ if $3\mid D$, so $\prod_{p\in{\calP}_3}p^{s/2}=3^{s/2}$ if $3\nmid D$
and $1$ if $3\mid D$.
\item We have $k_z=\Q(\sqrt{-3})$, $K_2=\Q(\sqrt{D})$, and 
$L=\Q(\sqrt{-3},\sqrt{D})$, so by Lemma 5.4 we have
$|(U/U^3)[T]|=3^{r(U)}$ with $r(U)=2+0-1-\delta_{D>0}$, where $\delta$ is
the Kronecker symbol, hence $3^{r(U)}=3/c_D$ with the notation of our
theorem.
\item By Definition 4.4, if $3\nmid D$ we have 
$\lceil \N(\mfb)\rceil=(1,*,3,3^2)$ while if $3\mid D$ we have 
$\lceil \N(\mfb)\rceil=(1,3^{1/2},3,3^{3/2})$ for
$\mfb=((1),(\sqrt{-3}),(3),(3\sqrt{-3}))$ respectively. Note that we use the
convention of Definition 4.1, so that $\lceil \N(\mfb)\rceil$ can be the
square root of an integer.
\item By Definition 4.4 we have $\N({\mathfrak r}^e(\mfb))=1$ unless 
$\mfb=(1)$ and $3\mid D$, in which case $\N({\mathfrak r}^e(\mfb))=3^{1/2}$
(where we again use the convention of Definition 4.1).
\item By Proposition 2.10 we have $\calD_3=\emptyset$ (hence 
${\mathfrak d}_3=1$) unless $D\equiv6\pmod9$, in which case $\calD_3=\{3\}$
(hence ${\mathfrak d}_3=3$), and for $p\ne3$ we have $p\in\calD$ if and only if
$\leg{-3D}{p}=1$. In particular ${\mathfrak r}^e(\mfb)\nmid{\mathfrak d}_3$ if
and only if $\mfb=(1)$ and $D\equiv3\pmod9$.
\item By Definition 4.5, if $3\nmid D$ we have $P_{\mfb}(s)=(1,*,-3^{-s},1)$ 
while if $3\mid D$ we have $P_{\mfb}(s)=(1,3^{-s/2},3^{-s/2}-3^{-s},1-3^{-s})$
for $\mfb=((1),(\sqrt{-3}),(3),(3\sqrt{-3}))$ respectively\footnote{Note that
there is a misprint in Definition 4.5 of \cite{CM}: when $e(\p_z/\p)=1$
and $b=0$, we must set $Q((p\Z_{K_2})^b,s)=1$ and not $0$. The vanishing of
certain terms in the final sum comes from the condition 
${\mathfrak r}^e(\mfb)\mid{\mathfrak d}_3$ of the theorem.}.
\item By Lemma 5.6, if $3\nmid D$ we have 
$|(Z_{\mfb}/Z_{\mfb}^3)[T]|=(1,*,3,3)$ while if $3\mid D$ we have 
$|(Z_{\mfb}/Z_{\mfb}^3)[T]|=(1,1,1,3)$ for
$\mfb=((1),(\sqrt{-3}),(3),(3\sqrt{-3}))$ respectively.
\end{itemize}
(In the above we put $*$ whenever $3\nmid D$ and $\mfb=(\sqrt{-3})$ since this
case is impossible.)

The theorem now follows immediately from Theorem 6.1 of \cite{CM}.
\end{proof}

For future reference, note the following lemma whose trivial proof is
left to the reader:

\begin{lemma}\label{lemomchi} Let $\chi$ be a cubic character, and as above
write $p\Z_L=\mfc\tau(\mfc)$. The following conditions are equivalent:
\begin{enumerate}
\item $\chi(\tau(\c)/\c) = 1$.
\item $\om_{\chi}(p)=2$.
\item $\chi(p\mfc)=1$.
\end{enumerate}
If these conditions are not satisfied we have $\om_{\chi}(p)=-1$.
\end{lemma}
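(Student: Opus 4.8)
The statement to prove is Lemma~\ref{lemomchi}, which asserts the equivalence of three conditions for a cubic character $\chi$ and a prime $p$ with $p\Z_L=\mfc\tau(\mfc)$: namely $\chi(\tau(\c)/\c)=1$, $\om_\chi(p)=2$, and $\chi(p\mfc)=1$; and that otherwise $\om_\chi(p)=-1$.

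\begin{proof}
The equivalence of (1) and (2) is simply the definition of $\om_\chi(p)$ for $p\ne3$ given in Theorem~\ref{theorem61}, and likewise the final sentence is immediate from that definition, since $\om_\chi(p)\in\{2,-1\}$ takes the value $-1$ precisely when $\chi(\tau(\c)/\c)\ne1$. It remains to prove that (1) and (3) are equivalent. Since $\chi$ is a cubic character, $\chi(\mfc)^3=1$, and the key observation is that $p\Z_L=\mfc\,\tau(\mfc)$ forces $\chi(p)\,\chi(\mfc)\,\chi(\tau(\mfc))=\chi(p\Z_L\cdot\mfc\tau(\mfc))$... more precisely, applying $\chi$ to $p\Z_L=\mfc\tau(\mfc)$ gives $\chi(p\Z_L)=\chi(\mfc)\chi(\tau(\mfc))$, where we write $\chi(p\Z_L)$ for the value of $\chi$ on the principal ideal $p\Z_L$ (equivalently on $p$ viewed in the relevant ray class group). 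Thus
\[
\chi(p\mfc)=\chi(p\Z_L)\chi(\mfc)=\chi(\mfc)^2\chi(\tau(\mfc))=\chi(\mfc)^{-1}\chi(\tau(\mfc))=\chi(\tau(\mfc)/\mfc),
\]
using $\chi(\mfc)^3=1$ in the third step. Hence $\chi(p\mfc)=1$ if and only if $\chi(\tau(\mfc)/\mfc)=1$, which is the desired equivalence of (3) and (1).
\end{proof}

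One subtlety worth flagging: the expression $\tau(\c)/\c$ is an ideal quotient on which $\chi$ must make sense, i.e. $\chi$ is a character of the appropriate ray class group $\Cl_\mfb(L)$ (or its quotient $G_\mfb$) to which all of $\mfc$, $\tau(\mfc)$, and $p\Z_L$ project; this is exactly the setting of Theorem~\ref{theorem61}, where $\mfc$ need not be prime but its class is well-defined modulo $\mfb$. The only possible obstacle is bookkeeping about whether $p$ and $\mfc$ genuinely have well-defined classes in the group on which $\chi$ is defined — but this is precisely the content of the footnote to Theorem~\ref{theorem61} (the class of $p$ lies in $G_\mfb$ exactly in the cases under consideration), so no further argument is needed here. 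As the paper itself says, the proof is trivial; the point of recording the lemma is to have the reformulation $\chi(p\mfc)=1$ available, which is the form most convenient for the later Euler-product manipulations in Section~\ref{sec_cm}.
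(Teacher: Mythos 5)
Your proof is correct, and the paper itself gives no argument here (it explicitly leaves the ``trivial proof'' to the reader); your computation $\chi(p\mfc)=\chi(\mfc)^2\chi(\tau(\mfc))=\chi(\tau(\mfc)/\mfc)$ using $\chi^3=1$ is exactly the intended one. Nothing further is needed.
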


To proceed further, we need to compute the size of the groups $G_{\mfb}$ and
to reinterpret the conditions involving $\chi$ as conditions involving
the cubic field associated to each pair $(\chi,\ov{\chi})$.

In what follows we write 
\begin{equation}\label{def:ha}
H_{\mfa} := \frac{\Cl_{\mfa}(D^*)}{(\Cl_{\mfa}(D^*))^3}[1 + \tau]\;,
\ \ \
H'_{\mfa} := \frac{\Cl_{\mfa}(D)}{(\Cl_{\mfa}(D))^3}[1 + \tau']\;,
\end{equation}
where $\tau$ and $\tau'$ are the nontrivial elements of 
$\Gal(\Q(\sqrt{D^*})/\Q)$ and $\Gal(\Q(\sqrt{D})/\Q)$ respectively, 
and $\Cl_{\mfa}(N)$ is shorthand for $\Cl_{\mfa}(\Q(\sqrt{N}))$. This $\tau$ 
is the restriction of $\tau \in \Gal(L/\Q)$ (see \eqref{eq:glq}) to 
$\Q(\sqrt{D^*})$, and we regard $\tau$ as an automorphism of both $L$ and of 
$\Q(\sqrt{D^*})$ (and $\tau'$ is the restriction of $\tau_2$, but we prefer
calling it $\tau'$).

\begin{proposition}\label{prop_g_size} We have
\begin{equation}\label{eqn_g1}
G_{\mfb} \simeq H_{(a)},
\end{equation}
where
\begin{itemize}
\item $a = 1$ if $\mfb = (1)$ or $(\sqrt{-3})$, or if $\mfb = (3)$ and $3 | D$;
\item $a = 3$ if $\mfb = (3)$ or $(3 \sqrt{-3})$, and $3 \nmid D$;
\item $a = 9$ if $\mfb = (3 \sqrt{-3})$ and $3 | D$.
\end{itemize}
\end{proposition}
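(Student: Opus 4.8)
The plan is to relate $G_{\mfb}$, a ray class group object for the biquadratic field $L = \Q(\sqrt{D},\sqrt{-3})$, to the simpler object $H_{(a)}$, a ray class group object for the quadratic field $\Q(\sqrt{D^*})$. The bridge is the observation that $\Gal(L/\Q) \cong (\Z/2\Z)^2$ acts on $\Cl_{\mfb}(L) / \Cl_{\mfb}(L)^3$, and that after killing cubes this $\F_3[\Gal(L/\Q)]$-module decomposes into isotypic components indexed by the four characters of $\Gal(L/\Q)$. The submodule annihilated by $T = \{\tau+1, \tau_2+1\}$ is exactly the component on which $\tau$ and $\tau_2$ both act by $-1$, i.e. the component on which $\tau\tau_2$ acts trivially and $\tau$ acts by $-1$; this is precisely the ``$\chi_{D^*}$-part'', which one identifies with a ray class group of the fixed field of $\tau$ in $L$ — but twisted, so that it becomes the minus part for $\Q(\sqrt{D^*})$, which is by definition $H_{(a)}$ for a suitable modulus $a$. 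This kind of argument (reflection/transfer between the components of a biquadratic class group) is standard, and likely mirrors what is done in \cite{CM}; I would cite the relevant lemmas there (Lemma 5.4, Lemma 5.6 were already invoked in the proof of Theorem \ref{theorem61}) rather than redo it from scratch.

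The key steps, in order, are as follows. First, decompose $\Cl_{\mfb}(L)/\Cl_{\mfb}(L)^3$ under the idempotents of $\F_3[\Gal(L/\Q)]$ (valid since $3 \nmid |\Gal(L/\Q)| = 4$), and identify the $[T]$-part with one specific isotypic piece. Second, use the standard isomorphism between that isotypic piece and the corresponding eigenspace of a ray class group of $\Q(\sqrt{D^*})$: concretely, the norm map and the transfer (Verlagerung) set up mutually inverse maps up to cubes between the relevant pieces, because $L/\Q(\sqrt{D^*})$ is a quadratic extension of degree prime to $3$. Third — the only genuinely delicate part — compute how the modulus $\mfb$ at the primes above $3$ in $L$ descends to a modulus $a \in \{1,3,9\}$ at $3$ in $\Q(\sqrt{D^*})$. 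This requires knowing the splitting and ramification of $3$ in the tower $\Q \subset \Q(\sqrt{D^*}) \subset L$, which depends on whether $3 \mid D$ (so $3 \mid D^*$, ramified in $\Q(\sqrt{D^*})$) or $3 \nmid D$ (so $3 \mid D^*$ still, since $D^* = -3D$, again ramified!) — one must be careful: in both cases $3$ ramifies in $\Q(\sqrt{D^*})$, but the behavior of $\sqrt{-3}$ versus $\sqrt{D}$ inside $L$ differs, and this is what separates the three bullet cases. I would organize this as a short case analysis on $\mfb \in \{(1),(\sqrt{-3}),(3),(3\sqrt{-3})\}$ crossed with the parity condition $3 \mid D$ vs.\ $3 \nmid D$, matching conductor exponents.

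The main obstacle I anticipate is precisely that third step: the translation of the $3$-part of the modulus $\mfb$ (an ideal of $L$, expressed via $\sqrt{-3}$ and $3$) into the exponent $a$ at the prime $3$ of $\Q(\sqrt{D^*})$. One has to track the different $\gd(L/\Q(\sqrt{D^*}))$ and how conductors multiply in towers, and the answer genuinely bifurcates according to how $3$ decomposes — this is the source of the $a=1,3,9$ trichotomy and of the split between $\mfb=(3)$ with $3\mid D$ (giving $a=1$) versus $3\nmid D$ (giving $a=3$). I would handle this by reducing everything to the local picture at $3$: compute, in each case, the conductor of the relevant cyclic cubic extensions over the two base fields and match exponents, citing the conductor-discriminant formula and the ramification data for $\Q(\sqrt{-3})$ and $\Q(\sqrt{D})$ at $3$ already assembled in the proof of Theorem \ref{theorem61}. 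The remaining verifications (that the isotypic projection commutes with passage to ray class groups, that norm and transfer are inverse up to $3$rd powers) are routine module theory over $\F_3[(\Z/2\Z)^2]$ and I would state them with a one-line justification rather than belabor them.
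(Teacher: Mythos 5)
Your overall architecture is the same as the paper's: decompose $\Cl_{\mfb}(L)/\Cl_{\mfb}(L)^3$ into isotypic components under $\Gal(L/\Q)$ (the paper uses the idempotent $(1+\tau\tau_2)/2$ and, for $\mfb=(1)$, the Scholz-type splitting $\Cl(L)/\Cl(L)^3\simeq \Cl(D)/\Cl(D)^3\oplus \Cl(D^*)/\Cl(D^*)^3$), identify the $[T]$-part with the minus eigenspace of a ray class group of $F=\Q(\sqrt{D^*})$ with modulus $\mfb\cap\Z_F$, and then analyze the modulus at $3$. Your first two steps are fine and match the paper.

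The gap is in the third step, which you correctly single out as the delicate one but then set up on a false premise. You assert that $3$ ramifies in $\Q(\sqrt{D^*})$ in both cases, claiming $3\mid D^*$ when $3\mid D$. In fact, when $3\mid D$ we have $D^*=-D/3$ with $9\nmid D$ (as $D$ is fundamental), so $3\nmid D^*$ and $3$ is \emph{unramified} in $F$; the ramified case is $3\nmid D$, where $D^*=-3D$ and $(3)=\mfp^2$ in $\Z_F$. This reversal is not cosmetic: the entire trichotomy $a\in\{1,3,9\}$ comes from whether the descended modulus is a power of the ramified prime $\mfp$ or of the inert/split $(3)$. Moreover, even with the correct ramification data, computing $\mfb\cap\Z_F$ does not finish the proof: one gets moduli $(1),\mfp,(3),\mfp^3$ (for $3\nmid D$) and $(1),(3),(3),(9)$ (for $3\mid D$), and one must still prove the collapses $H_{\mfp}\simeq H_{(1)}$, $H_{\mfp^3}\simeq H_{(3)}$, and (when $3\mid D$) $H_{(3)}\simeq H_{(1)}$. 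The paper does this via the ray class group exact sequence $1\to\Z_F^\times/\Z_F^{\mfa'}\to(\Z_F/\mfa')^\times\to\Cl_{\mfa'}(F)\to\Cl(F)\to1$, taking $3$-Sylow subgroups and $\tau$-eigenspaces: the $3$-part of $(\Z_F/\mfp)^\times$ is trivial, and the extra factor of $3$ in $(\Z_F/\mfp^3)^\times$ relative to $(\Z_F/3)^\times$ lies in the positive eigenspace; the case $3\mid D$, $\mfb=(3)$ additionally uses the fact (Propositions \ref{prop_count_cf} and \ref{prop_disc_vals}) that no cubic field has $v_3(\Disc)=2$. Your plan to ``match conductor exponents'' via the conductor--discriminant formula gestures in this direction but does not by itself supply these eigenspace computations, which are where the actual content of the trichotomy lives.
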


\begin{remarks}\hfill 
\begin{enumerate}
\item
Later we will associate a cubic field of discriminant $-D/3$, $-3D$, or $-27D$
to each pair of conjugate nontrivial characters of $G_{\mfb}$. Propositions 
\ref{prop_g_size} and \ref{prop_count_cf} will show that we obtain all such 
fields in this manner.
\item
Propositions \ref{prop_g_size},  \ref{prop_count_cf}, and 
\ref{prop_disc_vals} imply equalities among $|H_{(3^n)}|$ for different values
of $n$. In particular, $|H_{(3^n)}| = |H_{(9)}|$ for $n > 2$, and if $3 | D$ 
then $|H_{(3)}| = |H_{(1)}|$ as well. 
\end{enumerate}
\end{remarks}

\begin{proof}
For $\mfb = (1)$, $G_{\mfb}$ is just $\big( \Cl(L) / \Cl(L)^3 \big)[T]$. This
case may be handled with the others, but for clarity we describe it first.

By arguments familiar in the proof of the Scholz reflection principle (see,
e.g. \cite{Wa}, p. 191) we have
\begin{equation}\label{eqn_scholz}
\Cl(L) / \Cl(L)^3 \simeq \Cl(D) / \Cl(D)^3 \oplus \Cl(D^*) / \Cl(D^*)^3
\end{equation}
as $\Gal(L/\Q)$-modules. Since $\tau$ acts trivially on $\Q(\sqrt{D})$ we have
$(\Cl(D)/\Cl(D)^3)[1+\tau]=1$ hence $(\Cl(D)/\Cl(D)^3)[T]=1$, and since
$\tau_2$ acts nontrivially on $\Cl(D^*)/\Cl(D^*)^3$, $1+\tau_2$ acts as the 
norm, which annihilates the class group, so
$(\Cl(D^*)/\Cl(D^*)^3)[T]=\Cl(D^*)/\Cl(D^*)^3[1+\tau]$, so finally
\begin{equation}
\big(\Cl(L)/\Cl(L)^3\big)[T] = \Cl(D^*) / \Cl(D^*)^3[1+\tau] = H_{(1)}\;,
\end{equation}
as desired (note that since $\tau$ also acts nontrivially we have in fact
$\Cl(D^*) / \Cl(D^*)^3[1+\tau]=\Cl(D^*) / \Cl(D^*)^3$).

Now suppose that $\mfb$ is equal to $(\sqrt{-3})$, $(3)$, or $(3 \sqrt{-3})$. 
As $\tau$ and $\tau_2$ both act nontrivially on $G_{\mfb}$, 
$\tau \tau_2$ acts trivially. Moreover
$(1 + \tau \tau_2)/2 \in \F_3[\Gal(L/\Q)]$
is an idempotent, so the elements of $G_{\mfb}$ are those that may be represented by
an ideal of the form $\mfa \tau \tau_2(\mfa)$, which is necessarily of the form 
$\mfa' \Z_L$ for an ideal $\mfa'$ of $\Q(\sqrt{D^*})$. When $\mfb = (1)$ this yields an 
isomorphism $G_{(1)} \xrightarrow{\sim} H_{(1)}$, following \eqref{eqn_scholz}. When
$\mfb \ne (1)$, this yields an isomorphism 
$G_{\mfb} \xrightarrow{\sim} H_{\mfa'}$, where $\mfa' := \mfb \cap \Z_{\Q(\sqrt{D^*})}$.
In this case the $(1 + \tau)$-invariance is no longer automatic.

For convenience we write $F := \Q(\sqrt{D^*})$ in the remainder of the proof.
The ideal $\mfb \cap \Z_F$ is simple to determine. If $3 | D$, then $3$ is unramified in $F$, and so $\mfb \cap \Z_F$ is equal to $(1)$, $(3)$, $(3)$, $(9)$
for $\mfb = (1)$, $(\sqrt{-3})$, $(3)$, $(3 \sqrt{-3})$ respectively, as desired. Moreover,
in this case Propositions \ref{prop_count_cf} and \ref{prop_disc_vals} below
imply that $H_{(3)} \simeq H_{(1)}$ (there are no cubic fields whose discriminants
have $3$-adic valuation $2$).

If $3 \nmid D$, then write $(3) = \mfp^2$ in $\Z_F$, and for $\mfb = (1)$, 
$(\sqrt{-3})$, $(3)$, $(3 \sqrt{-3})$, $\mfb \cap \Z_F$ is equal to 
$(1)$, $\mfp$, $(3)$, $3 \mfp$ respectively.
We write down the {\itshape ray class group exact sequence}
\begin{equation}\label{eqn_rcges}
1 \rightarrow
\Z_F^{\times} / \Z_F^{\mfa'}
\rightarrow
(\Z_F / \mfa')^{\times}
\rightarrow
\Cl_{\mfa'}(F)
\rightarrow
\Cl(F)
\rightarrow
1,
\end{equation}
where $\Z_F^{\mfa'}$ is the subgroup of units congruent to $1$ modulo $\mfa'$. We
take $3$-Sylow subgroups and take negative eigenspaces for the action of $\tau$ 
(i.e., write each $3$-Sylow subgroup $A$ as $A^+ \oplus A^-$, where 
$A^{\pm} := \{x \in A: \tau(x) = x^{\pm 1} \}$, and take $A^-$); 
these operations preserve exactness. If $A$ is the $3$-Sylow subgroup of 
$\Cl_{\mfa'}(F)$, then $H_{\mfa'}$ is isomorphic to 
$(A/A^3)^- \simeq A^- / (A^-)^3$.

For $\mfb = (1)$ or $(3)$, this finishes the proof. For $\mfb = (\sqrt{-3})$, 
the $3$-part of $(\Z_F / \mfp)^{\times}$ is trivial;
hence, the $3$-Sylow subgroups of $\Cl(F)$ and $\Cl_{\mfp}(F)$ are isomorphic, so
$H_{\mfp} \simeq H_{(1)}$.

For $\mfb = (3 \sqrt{-3})$, the $3$-Sylow subgroup of $(\Z_F/\mfp^3)^{\times}$ is
larger than that of $(\Z_F/3)^{\times}$ by a factor of $3$; however, the same is 
true of the positive eigenspace and hence not of the negative 
eigenspace. Therefore $H_{\mfp^3} \simeq H_{(3)}$. 

\end{proof}

We now state a well-known formula for counting cubic fields in terms of ray
class groups.

\begin{proposition}\label{prop_count_cf}
If $D$ is a fundamental discriminant other than $1$, $c$ is any nonzero 
integer, and $H'_{\mfa}$ is defined as in \eqref{def:ha}, then we have
\begin{equation}
\sum_{d | c} |\calL_{D d^2}| = \frac{1}{2} \Big( \big| H'_{(c)} \big| - 1\Big).
\end{equation}
\end{proposition}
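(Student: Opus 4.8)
The plan is to identify both sides with a count of cubic fields $K$ whose quadratic resolvent is $\Q(\sqrt{D})$ and whose conductor $f(K)$ divides $c$. On the left-hand side, $\calL_{Dd^2}$ is by definition the set of cubic fields of discriminant $Dd^2$, and since $\Disc(K) = Df(K)^2$ with $f(K) = d$, the union $\bigsqcup_{d\mid c}\calL_{Dd^2}$ is exactly the set of cubic fields $K$ with resolvent $\Q(\sqrt{D})$ and $f(K)\mid c$; thus the left-hand side counts these fields. On the right-hand side, I would invoke the standard Galois/Kummer correspondence already set up in the excerpt (in the special case $D^*$ replaced by $D$, $L$ replaced by $\Q(\sqrt{D})$, and no $3$-adic complications since $c$ is arbitrary but we are working over $\Q(\sqrt{D})$ directly rather than over $L$): cubic fields with resolvent $\Q(\sqrt{D})$ and conductor dividing $c$ correspond to equivalence classes $\{\ov\al,\ov\al^{-1}\}$ of nontrivial elements of a suitable ray-class-type group annihilated by $1+\tau'$, where $\tau'$ generates $\Gal(\Q(\sqrt{D})/\Q)$. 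The group in question is precisely $H'_{(c)} = \Cl_{(c)}(\Q(\sqrt{D}))/(\Cl_{(c)}(\Q(\sqrt{D})))^3[1+\tau']$.

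More concretely, I would argue as follows. A cubic field $K$ with resolvent $\Q(\sqrt{D})$ gives a cyclic cubic extension $\wK = K(\sqrt{D})$ of $F := \Q(\sqrt{D})$, and by class field theory such extensions correspond to index-$3$ subgroups of ray class groups $\Cl_{\mff}(F)$, equivalently to order-$3$ quotients, equivalently (dualizing) to cyclic subgroups of order $3$ in $\widehat{\Cl_{\mff}(F)}$; the condition that $\wK/\Q$ be Galois with group $S_3$ (rather than $C_6$ or abelian) translates into the relevant character being in the $(-1)$-eigenspace for $\tau'$, i.e. annihilated by $1+\tau'$. The conductor-divides-$c$ condition says we may take $\mff = (c)$. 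Each cubic field $K$ yields an unordered pair of such characters $\{\chi,\ov\chi\}$ (the two cubic characters cutting out $\wK/F$), and conversely each such pair yields a $K$, well-defined up to isomorphism. Hence the number of such $K$ equals $\tfrac12\big(|H'_{(c)}[1+\tau']| - 1\big)$, the $-1$ removing the trivial character and the $\tfrac12$ accounting for the pairing $\chi\leftrightarrow\ov\chi$ (noting $\chi\neq\ov\chi$ since $\chi$ has order $3$). Comparing the two counts gives the claimed identity.

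The main obstacle, and the step requiring the most care, is pinning down exactly which ray class group character data corresponds to cubic fields with resolvent precisely $\Q(\sqrt{D})$ and conductor dividing $c$ — in particular verifying that the conductor of the abelian extension $\wK/F$ is exactly $f(K)\Z_F$ (this is asserted in the excerpt's summary of \cite{CM}, where it is noted that $\mff(K(\sqrt{D})/\Q(\sqrt{D})) = f(K)\Z_{\Q(\sqrt{D})}$, so I can cite it) and that the $[1+\tau']$-annihilation condition is equivalent to the Galois group of the closure being $S_3$ rather than $C_3$ or $C_6$. Once that dictionary is in place, the counting is a routine bookkeeping exercise: the factor $\tfrac12$ and the $-1$ fall out immediately from the $\chi\mapsto\ov\chi$ involution on nontrivial cubic characters. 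This is a well-known formula, so I expect the proof to be short, essentially a pointer to class field theory together with the conductor computation recalled from \cite{CM}.
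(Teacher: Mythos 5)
Your proposal is correct and follows essentially the same route as the paper: the authors also prove this by the class field theory dictionary (cubic extensions of $\Q(\sqrt{D})$ of conductor dividing $(c)$ correspond to index-$3$ subgroups of $\Cl_{(c)}$, with descent to an $S_3$-cubic over $\Q$ encoded by the $[1+\tau']$ condition), citing (1.3) and Lemma 1.10 of Nakagawa \cite{N} for the precise statement. Your write-up simply fleshes out the same sketch, including the identification $\mff(K(\sqrt{D})/\Q(\sqrt{D}))=f(K)\Z_{\Q(\sqrt{D})}$ and the $\tfrac12(|H'_{(c)}|-1)$ count from the $\chi\leftrightarrow\ov{\chi}$ pairing.
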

\begin{proof}
This is a combination of (1.3) and Lemma 1.10 of Nakagawa \cite{N}, and it is also
a fairly standard fact from class field theory. The idea is that cubic extensions
of $\Q(\sqrt{D^*})$ whose conductor divides $(c)$ correspond to subgroups of 
$\Cl_{(c)}(D^*)$ of index $3$, and that such a cubic extension descends to a cubic
extension of $\Q$ if and only if it is in the kernel of $1 + \tau$.
\end{proof}

We also have the following counting result, which relies on the deeper part 
of the work of Nakagawa \cite{N} and Ohno \cite{Ohno}.
\begin{proposition}\label{prop_no}
Let $D$ be a fundamental discriminant, and set $r=\rk_3(D^*)$.
\begin{enumerate}[(1.)]

\item
Assume that $D < -3$. Then we have
  \begin{displaymath}
  (|\calL_{D^*}|, |\calL_{-27D}|) = 
   \left\{
     \begin{array}{ll}
       ((3^r - 1)/2, 3^r) & {\rm if } \ \rk_3(D) = r + 1, \\
       ((3^r - 1)/2, 0) & {\rm if } \ \rk_3(D) = r. \\
     \end{array}
   \right.
\end{displaymath} 
In  either case, $|\calL_3(D)| = (3^{\rk_3(D)} - 1)/2$.

\item
Assume that $D > 1$. Then we have
  \begin{displaymath}
  (|\calL_{D^*}|, |\calL_{-27D}|) = 
   \left\{
     \begin{array}{ll}
       ((3^r - 1)/2, 0) & {\rm if } \ \rk_3(D) = r - 1, \\
       ((3^r - 1)/2, 3^r) & {\rm if } \ \rk_3(D) = r. \\
     \end{array}
   \right.
\end{displaymath} 
In  either case, $|\calL_3(D)| = (3^{\rk_3(D) + 1} - 1)/2$.

\end{enumerate}

\end{proposition}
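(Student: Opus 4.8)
The plan is to deduce Proposition~\ref{prop_no} by combining the counting formula of Proposition~\ref{prop_count_cf} with the Scholz reflection principle and the Ohno--Nakagawa identity for binary cubic forms. First I would record the two instances of Proposition~\ref{prop_count_cf} that are relevant: applied to the field $\Q(\sqrt{D})$ with $c=1$ it gives $|\calL_D| = (|H'_{(1)}|-1)/2$, and applied to the mirror field $\Q(\sqrt{D^*})$ with $c=1$ it gives $|\calL_{D^*}| = (|H_{(1)}|-1)/2$. Since $\Cl(D^*)/\Cl(D^*)^3$ has $\tau$ acting as $-1$ (the field is imaginary or real as the case may be, but in any event $\tau$ is the nontrivial automorphism and $1+\tau$ is the norm, which kills the $3$-part), we get $|H_{(1)}| = |\Cl(D^*)/\Cl(D^*)^3| = 3^{\rk_3(D^*)} = 3^r$, hence $|\calL_{D^*}| = (3^r-1)/2$ unconditionally. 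This already pins down the first coordinate in both parts of the proposition, so the entire content is the determination of $|\calL_{-27D}|$.

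Next I would bring in the Ohno--Nakagawa relation. The point is that cubic fields of discriminant $-27D$ are exactly the cubic fields $E$ with quadratic resolvent $\Q(\sqrt{-27D}) = \Q(\sqrt{-3D})$ that are ``extra'' in the sense of not already having conductor-$1$ behaviour over the mirror field; more precisely, the Ohno--Nakagawa identity (in the form used by Nakagawa~\cite{N}, which is exactly the input to Proposition~\ref{prop_count_cf} and its refinements) relates the count of cubic fields of discriminant $D^*$ to that of discriminant $-27D$ via the reflection $D \leftrightarrow D^*$. Concretely, one has the clean statement that $|\calL_{-27D}|$ equals $3|\calL_{D^*}| + 1$ when a certain reflection inequality is strict and $0$ otherwise, and $3\cdot\frac{3^r-1}{2}+1 = \frac{3^{r+1}-1}{2}$. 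The dichotomy governing whether $|\calL_{-27D}|$ vanishes is precisely Scholz: for $D<-3$ one has $\rk_3(D) \in \{r, r+1\}$ by the Remark following the definitions, and $|\calL_{-27D}| = 3^r \ne 0$ exactly in the case $\rk_3(D) = r+1$; for $D>1$ one has $\rk_3(D^*) = \rk_3(D)$ or $\rk_3(D)+1$, i.e.\ $r \in \{\rk_3(D), \rk_3(D)+1\}$, and $|\calL_{-27D}| = 3^r \ne 0$ exactly when $r = \rk_3(D)$. I would verify each of these two cases by writing $\rk_3(D)$ in terms of $r$ and checking that $|\calL_3(D)| = |\calL_{D^*}| + |\calL_{-27D}|$ evaluates to $(3^{\rk_3(D)}-1)/2$ for $D<-3$ and to $(3^{\rk_3(D)+1}-1)/2$ for $D>1$, which is a one-line arithmetic check in each of the four subcases.

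For the sign/vanishing statement it is cleanest to argue as follows. When $\rk_3(D) = \rk_3(D^*)$ (the ``Scholz-equal'' case), reflection forces the set of cubic fields of discriminant $-27D$ to be empty: every cubic field with resolvent $\Q(\sqrt{-3D})$ already ``comes from'' the mirror side with conductor $1$, so there is nothing of conductor $3$; this is the content of the first line of part~(1) being absent and the first line of part~(2) having $|\calL_{-27D}|=0$. When the ranks differ by one, the extra rank produces $3^{\rk_3(\cdot)}$ many additional cubic fields, all of discriminant exactly $-27D$ (the $3$-adic valuation of the discriminant is forced to be $3$, not $0$ or larger, by the local analysis of how $3$ ramifies — this will be tied to Proposition~\ref{prop_disc_vals}). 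Assembling: $|\calL_{D^*}| = (3^r-1)/2$ always, $|\calL_{-27D}| \in \{0, 3^r\}$ according to the Scholz dichotomy, and summing gives the ``in either case'' totals.

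The main obstacle, I expect, is the precise bookkeeping of the $3$-adic conductor: one must show that the cubic fields counted on the $-27D$ side have discriminant exactly $-27D$ (not $-3D\cdot 3^{2k}$ for larger $k$, and not $D^*$), and that Proposition~\ref{prop_count_cf} applied with $c$ a power of $3$ over $\Q(\sqrt{D})$ really does stratify as $\sum_{d\mid c}|\calL_{Dd^2}|$ with only $d=1$ and $d=3$ contributing. This is exactly where the ``deeper part'' of Nakagawa's and Ohno's work enters — the combinatorial/analytic identity for binary cubic forms that forbids intermediate conductor exponents — rather than the soft class-field-theory part; everything else is reflection-principle linear algebra over $\F_3$ plus the elementary identity $3\cdot\frac{3^r-1}{2}+1 = \frac{3^{r+1}-1}{2}$.
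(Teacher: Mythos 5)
Your overall architecture matches the paper's proof: get $|\calL_{D^*}|=(3^r-1)/2$ from class field theory (unramified cyclic cubic extensions of $\Q(\sqrt{D^*})$, equivalently Proposition~\ref{prop_count_cf} with $c=1$ over the mirror field), then invoke the Ohno--Nakagawa identity for the rest, and finally use Scholz's inequality to split into the two subcases. However, your statement of the key identity is wrong, and this is not a cosmetic slip. You assert ``$|\calL_{-27D}|$ equals $3|\calL_{D^*}|+1$ when a certain reflection inequality is strict and $0$ otherwise.'' First, $3\cdot\frac{3^r-1}{2}+1=\frac{3^{r+1}-1}{2}$, which is \emph{not} $3^r$, so this contradicts the very proposition you are proving whenever $r\ge 1$; you have conflated $|\calL_{-27D}|$ with the total $|\calL_3(D)|=|\calL_{D^*}|+|\calL_{-27D}|$. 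Second, the identity actually used (Nakagawa's Theorem~0.4) is \emph{unconditional}: $|H'_{(1)}|=|H_{(a)}|$ for $D<0$ and $3|H'_{(1)}|=|H_{(a)}|$ for $D>0$ (with $a=3$ or $9$ according as $3\nmid D$ or $3\mid D$), which via Proposition~\ref{prop_count_cf} translates into $2|\calL_3(D)|+1=3^{\rk_3(D)}$ resp.\ $3^{\rk_3(D)+1}$ with no case distinction. The dichotomy is not an input triggered by ``strictness of a reflection inequality''; it falls out at the end by subtracting $|\calL_{D^*}|=(3^r-1)/2$ from $|\calL_3(D)|$ and using Scholz only to know that $\rk_3(D)-r\in\{0,1\}$ (resp.\ $\{-1,0\}$). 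Your plan to ``verify that $|\calL_3(D)|=|\calL_{D^*}|+|\calL_{-27D}|$ evaluates correctly'' is circular as written, since $|\calL_{-27D}|$ is the unknown; the correct flow is to solve for it.

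Two smaller points. Your worry about intermediate conductor exponents is legitimate but is resolved softly, not by the ``deeper part'' of Ohno--Nakagawa: when $3\mid D$ one has $a=9$ and the $d=3$ term in $\sum_{d\mid 9}|\calL_{D^*d^2}|$ is $|\calL_{-3D}|$, which vanishes because $v_3(-3D)=2$ is not an admissible $3$-adic valuation of a cubic field discriminant (Proposition~\ref{prop_disc_vals}); the deep input is only the single identity relating $|H'_{(1)}|$ to $|H_{(a)}|$. Also, the heuristic in your third paragraph (``every cubic field already comes from the mirror side with conductor $1$'') is not an argument and should be deleted or replaced by the subtraction above. With the identity stated correctly, your proof becomes the paper's proof.
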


\begin{proof}
The formulas for $|\calL_{D^*}|$ follow from class field theory, as these count
unramified cyclic cubic extensions of $\Q(\sqrt{-3D})$, which are in bijection 
with subgroups of $\Cl(-3D)$ of index $3$. It therefore
suffices to prove the stated formulas for $|\calL_3(D)|$, and these follow from 
work of Nakagawa \cite{N}.

Recalling the notation in \eqref{def:ha},
Nakagawa proved \cite[Theorem 0.4]{N} that if $D < 0$, then
\begin{equation}
|H'_{(1)}| = |H_{(a)}|,
\end{equation}
where $a = 3$ if $3 \nmid D$ and $a = 9$ if $3 | D$; and if $D > 0$ then
\begin{equation}
3 |H'_{(1)}| = |H_{(a)}|
\end{equation}
with the same $a$.

By Proposition \ref{prop_count_cf}, these formulas are 
equivalent to the stated formulas for $|\calL_3(D)|$.
\end{proof}

\begin{proposition}\label{prop_disc_vals}
If $K$ is a cubic field then $v_3(\Disc(K))$ can only be equal to $0$, $1$, 
$3$, $4$, and $5$ in relative proportions 
$81/117, \ 27/117, \ 6/117, \ 2/117,$ and $1/117$, when the fields are
ordered by increasing absolute value of their discriminant.
\end{proposition}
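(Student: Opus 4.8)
The plan is to determine, for each admissible value $v\in\{0,1,3,4,5\}$ of $v_3(\Disc(K))$ over all cubic fields $K$, the Dirichlet density of the set of $K$ with $v_3(\Disc(K))=v$, and then read off the relative proportions. Since the total count of cubic fields of absolute discriminant at most $X$ is asymptotic to $cX$ for a constant $c>0$ (Davenport--Heilbronn), these Dirichlet densities, suitably normalized, give exactly the limiting relative frequencies stated in the proposition. Concretely I would use the Davenport--Heilbronn parametrization: cubic fields correspond (essentially) to $\mathrm{GL}_2(\Z)$-equivalence classes of irreducible integral binary cubic forms, and the local condition $v_3(\Disc(K))=v$ translates into a condition on the form modulo a fixed power of $3$. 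The densities are then computed purely $3$-adically: one counts, with respect to the Haar (or $3$-adic) measure, the proportion of maximal binary cubic forms over $\Z_3$ whose associated cubic $\Z_3$-algebra is a field (equivalently \'etale, or has the desired discriminant valuation). This is exactly the kind of local mass formula worked out by Davenport--Heilbronn and made fully explicit by Datskovsky--Wright and by Nakagawa; alternatively one can cite Bhargava's local mass computations for cubic rings at the prime $3$.

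The key steps, in order: (i) recall that the splitting type / conductor-discriminant data of $K$ at $3$ is governed by the cubic $\Z_3$-étale algebra $K\otimes\Q_3$ together with the index of the maximal order, so that the five values $v\in\{0,1,3,4,5\}$ correspond to a finite list of local types (unramified and tamely vs. wildly ramified cubic \'etale algebras over $\Q_3$); (ii) assign to each local type its $3$-adic mass, i.e. the measure of the set of maximal integral binary cubic forms over $\Z_3$ giving that type, using the known local density formulas at $p=3$; (iii) renormalize so that the five masses sum to $1$ — they will come out proportional to $81:27:6:2:1$, with total $117$; (iv) invoke Davenport--Heilbronn (plus, if one wants an honest count rather than just a density statement, the error term, but for the proportion statement the main term suffices) to convert the $3$-adic density into the asymptotic proportion among all cubic fields ordered by $|\Disc|$. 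A clean shortcut for step (ii)–(iii) is to use the local factor at $3$ of the Shintani/Datskovsky--Wright zeta function counting cubic fields, whose Euler factor at a prime $p$ encodes precisely $\sum_v (\text{density of }v_3=v)\,p^{-vs}$-type data; specializing the known formula at $p=3$ gives the numbers $81,27,6,2,1$ directly.

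The main obstacle is that $3$ is the one bad prime for binary cubic forms: wild ramification occurs, Bhargava-style "mass $=\sum 1/|\mathrm{Aut}|$" heuristics need the careful $p=3$ corrections, and the naive count of forms mod $3^k$ does not stabilize as quickly as for $p\neq 3$. So the real work is getting the $3$-adic local densities exactly right — distinguishing, among forms with $v_3(\Disc)\in\{4,5\}$, the wildly ramified cubic fields from non-maximal orders over unramified or tamely ramified algebras — and this is precisely where I would lean on the explicit computations in Nakagawa \cite{N} (or Datskovsky--Wright, or Bhargava), rather than redo them. Everything else (Davenport--Heilbronn asymptotics, passing from Euler factor at $3$ to a proportion) is standard. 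I would also note as a sanity check that the proportions $\bigl(1 - \tfrac{1}{3}\bigr)\bigl(1 + \tfrac13 + \tfrac{6}{117}\cdot\tfrac{117}{81}\cdots\bigr)$-type bookkeeping is consistent with the $3$-Euler factors $M_1(s)$ in Theorem \ref{thm_main_cubic} above: the exponents $0,1,2$ of $1/3^s$ appearing there (with the squaring $\Disc(K)=Df(K)^2$ in mind) match the allowed values $v_3(\Disc(K))\in\{0,1,3,4,5\}$ via $v_3(D)+2v_3(f(K))$, which gives an independent consistency check on the list $\{0,1,3,4,5\}$ and on the absence of the value $2$ (Remark following Theorem \ref{thm_main_cubic}).
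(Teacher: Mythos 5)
Your proposal matches the paper's proof in substance: the paper likewise notes that the restriction of $v_3(\Disc(K))$ to $\{0,1,3,4,5\}$ is classical (citing Hasse) and that the proportions $81:27:6:2:1$ follow from the local density computations at $p=3$ in the proof of the Davenport--Heilbronn theorem, deferring the explicit table to Section 6.2 of \cite{TT}. Both arguments reduce to the same $3$-adic mass computation for binary cubic forms, so there is nothing to add.
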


\begin{proof}
The proof that $v_3(\Disc(K))$ can take only the given values is classical; 
see Hasse \cite{Has}. The proportions follow from the proof of the 
Davenport-Heilbronn theorem. A convenient reference is Section 6.2 of \cite{TT}, 
where a table of these proportions is given in the context of 
``local specifications''; these proportions also appear
(in slightly less explicit form) in the earlier related literature.
\end{proof}

Before proceeding to the proof of Theorem \ref{thm_main_cubic}
we give the following application:

\begin{proposition}\label{case22}\hfill\begin{enumerate}\item If $D < 0$ and
$(\rk_3(D),\rk_3(D^*))=(2,1)$, or $D > 0$ and $(\rk_3(D),\rk_3(D^*))=(1,1)$
there exist a unique cubic field of discriminant $D^*$ and three cubic fields 
of discriminant $-27D$.
\item If $D < 0$ and $(\rk_3(D),\rk_3(D^*))=(2,2)$, or $D > 0$ and
$(\rk_3(D),\rk_3(D^*))=(1,2)$ there exist four cubic fields of discriminant $D^*$ 
and no cubic field of discriminant $-27D$.

In addition, if $3\nmid D$ then $3$ is partially ramified in the four cubic
fields, if $D\equiv3\pmod9$ then $3$ is partially split in the four cubic
fields, and if $D\equiv6\pmod9$ then $3$ is totally split in one of the four
cubic fields and inert in the three others.
\end{enumerate}\end{proposition}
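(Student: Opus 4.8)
The plan is to deduce both parts directly from the counting results in Proposition~\ref{prop_no}, and then to determine the splitting type of $3$ by combining the table in Theorem~\ref{thm_main_cubic} with the explicit $3$-adic local theory used to prove Proposition~\ref{prop_disc_vals}. For part~(1), the hypothesis $(\rk_3(D),\rk_3(D^*))=(2,1)$ (for $D<0$) means $r:=\rk_3(D^*)=1$ and $\rk_3(D)=r+1$, so the first case of Proposition~\ref{prop_no}(1) applies and gives $(|\calL_{D^*}|,|\calL_{-27D}|)=((3-1)/2,3)=(1,3)$. Similarly for $D>0$, the hypothesis $(\rk_3(D),\rk_3(D^*))=(1,1)$ means $r=1$ and $\rk_3(D)=r$, so the second case of Proposition~\ref{prop_no}(2) applies and gives $(|\calL_{D^*}|,|\calL_{-27D}|)=((3-1)/2,3)=(1,3)$. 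Part~(2) is identical: the hypotheses put us in the other branch of each case of Proposition~\ref{prop_no}, yielding $(|\calL_{D^*}|,|\calL_{-27D}|)=((9-1)/2,0)=(4,0)$ in both the $D<0$ and $D>0$ situations.

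For the splitting-type statement in part~(2), I would argue as follows. A cubic field $E$ has $\Disc(E)=D^*$, and $v_3(D^*)$ is $0$, $1$, or $0$ according as $3\nmid D$, $D\equiv 3\pmod 9$, or $D\equiv 6\pmod 9$ (recall $D^*=-3D$ if $3\nmid D$ and $D^*=-D/3$ if $3\mid D$). When $3\nmid D$ we have $v_3(\Disc(E))=1$, which by the classical local analysis (Hasse \cite{Has}, as invoked in Proposition~\ref{prop_disc_vals}) forces $3$ to be partially ramified, i.e.\ $3\Z_E=\mfp^2\mfq$; in particular $\om_E(3)=0$. When $D\equiv 3\pmod 9$ we have $v_3(\Disc(E))=0$ and $D^*\equiv 2\pmod 3$ (as noted in the first Remark following Theorem~\ref{thm_main_cubic}), so $\big(\tfrac{\Disc(E)}{3}\big)=\big(\tfrac{2}{3}\big)=-1$, whence by the standard criterion (Definition~\ref{def_omegal}, second Remark: $\om_E(3)=0$ iff $\leg{\Disc(E)}{3}\ne 1$, and since $\Disc(E)$ is here a quadratic residue pattern determined by $D^*\bmod 3$) the prime $3$ is partially split: $3\Z_E=\mfp\mfq$. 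When $D\equiv 6\pmod 9$ we have $v_3(\Disc(E))=0$ and $D^*=-D/3\equiv 1\pmod 3$, so $3$ is unramified with $\om_E(3)\in\{-1,2\}$, i.e.\ $3$ is either inert or totally split in each of the four fields; it remains only to show exactly one of the four has $\om_E(3)=2$.

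The key remaining point is therefore the count "one totally split, three inert" in the $D\equiv 6\pmod 9$ subcase of part~(2); I expect this to be the main obstacle. The natural approach is to apply Theorem~\ref{thm_main_cubic} itself. In this subcase $\LL_{-27D}=\emptyset$ and $\LL_3(D)=\LL_{D^*}$ has exactly four elements, and from the table $M_1(s)=1+2/3^s+6/3^{2s}$ and $M_{2,E}(s)=1+2/3^s+3\om_E(3)/3^{2s}$ for each $E\in\LL_{D^*}$. Comparing the coefficient of $3^{-2s}$ on both sides of \eqref{eqn_main_cubic} — using that the Euler products $\prod_{\leg{-3D}{p}=1}(1+2/p^s)$ and $\prod(1+\om_E(p)/p^s)$ contribute $1$ at the prime $3$ since the product omits $p=3$ — gives a linear relation: the left side contributes $\tfrac12\cdot 6=3$, while each $E$ contributes $\tfrac12\cdot 3\om_E(3)=\tfrac32\om_E(3)$...

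Actually, I realize I should be more careful about the constant term matching too. Let me reconsider: matching the $3^0$-coefficients forces $|\LL_{D^*}|=4$ (giving $\tfrac12+\tfrac12\cdot 4=\tfrac52$ on the right, against $c_D\Phi_D(s)$'s constant $c_D\cdot\tfrac12$; this is consistent as $c_D=3$ here and... the bookkeeping requires the companion PARI/GP verification or a direct argument). The cleanest route is: matching the $3^{-2s}$ coefficient in \eqref{eqn_main_cubic} yields $\tfrac12\cdot 6 + \sum_{E\in\LL_{D^*}}\tfrac12\cdot 3\om_E(3)\cdot\sum\text{(lower order)}$... — in any case the coefficient comparison produces $3 + \tfrac32\sum_{E}\om_E(3) = (\text{known quantity})$, and since $\om_E(3)\in\{-1,2\}$ for all four $E$, the only way the Dirichlet series of $\Phi_D$ (which has nonnegative coefficients away from the constant) can be consistent is $\sum_E\om_E(3)=2\cdot 1+(-1)\cdot 3=-1$, i.e.\ exactly one field totally split and three inert. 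The hard part is making this coefficient-matching rigorous without circularity, since Theorem~\ref{thm_main_cubic} is proved later; the honest alternative is to note that this splitting statement is exactly what Proposition~\ref{case22} is asserting as an \emph{illustration} of Theorem~\ref{thm_main_cubic}, so one may legitimately invoke the theorem here, or else give an independent genus-theory argument: the four cubic fields correspond to the four nontrivial cubic characters (up to conjugacy, so eight characters) of $H_{(9)}\simeq(\Z/3\Z)^2$, and $\om_E(3)$ is governed by the value of the associated character on the ramified prime above $3$ in $L$, which is a nontrivial class in exactly $\tfrac{|H_{(9)}|}{3}=3$ of the... — tracing through which character sends the relevant class to $1$ gives the count $1$ versus $3$.
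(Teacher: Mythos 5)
Your handling of the field counts (both parts, via Proposition \ref{prop_no}) and of the splitting of $3$ when $3\nmid D$ (partially ramified, by Hasse) and when $D\equiv3\pmod9$ (partially split, since $D^*\equiv2\pmod3$) matches the paper exactly, and there is no circularity worry in invoking Theorem \ref{thm_main_cubic} here: the paper does precisely that, and the theorem's proof does not use this proposition.

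The genuine gap is the case $D\equiv6\pmod9$, where your argument does not close. Extracting the $3^{-2s}$ coefficient of \eqref{eqn_main_cubic} gives (for $D<0$, $c_D=1$) $|\calL_{81D}|=\tfrac12\cdot6+\sum_E 3\,\om_E(3)=3\bigl(1+\sum_E\om_E(3)\bigr)$; note your per-field contribution $\tfrac12\cdot3\,\om_E(3)$ is wrong, as the factor $\tfrac12$ multiplies only the $M_1$ term. If $a$ of the four fields are totally split and the rest inert, this equals $9(a-1)$, so nonnegativity of the coefficients of $\Phi_D$ rules out only $a=0$; it does not force $a=1$ over $a\in\{2,3,4\}$ as you assert, and your ``known quantity'' is never identified. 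The missing ingredient is an independent proof that $|\calL_{81D}|=0$. The paper supplies it as follows: by Proposition \ref{prop_count_cf}, $|\calL_{9D}|=\tfrac12(|H'_{(3)}|-|H'_{(1)}|)$ and $|\calL_{81D}|=\tfrac12(|H'_{(9)}|-|H'_{(3)}|)$; the $3^{-s}$ coefficient of the theorem gives $|\calL_{9D}|=9$, and $|H'_{(1)}|=3^{\rk_3(D)}=9$, so $|H'_{(3)}|=27=3|H'_{(1)}|$; then the ray class group exact sequence \eqref{eqn_rcges} for $\Q(\sqrt{D})$ shows the negative eigenspace of the $3$-part of $(\Z_{\Q(\sqrt{D})}/(9))^{\times}$ has $3$-rank $1$, whence $|H'_{(9)}|\le3|H'_{(1)}|=|H'_{(3)}|$, forcing $|\calL_{81D}|=0$ and $a=1$. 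Your closing sketch (one of the four index-$3$ subgroups of $H_{(9)}\simeq(\Z/3\Z)^2$ contains a given nontrivial class) could in principle substitute, but you neither identify the relevant class nor prove it is nontrivial, and that nontriviality is exactly equivalent to the ray-class computation you are missing.
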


\begin{proof} The first statements are special cases of Proposition \ref{prop_no}, the
behavior of $3$ when $3\nmid D$ is classical (see \cite{Has}), and when
$D\equiv3\pmod9$ the last statement is trivial since $D^*\equiv2\pmod3$. 

For the case
$D\equiv6\pmod9$ we use Theorem \ref{thm_main_cubic}. Writing out 
the 3-part of the theorem for the discriminant $D$, we see that
\begin{equation}
| \calL_{81D} | = 3 \biggl(1 + \sum_E  \om_E(3) \biggr)\;,
\end{equation}
where the sum ranges over the cubic fields $E$ of discriminant $-D/3$, and 
$\om_E(3)$ is equal to $2$ if $3$ is totally split in $E$, and $-1$ if $3$
is inert in $E$. Therefore, if $3$ is totally split in $0$, $1$, $2$, $3$, or 
$4$ of these fields then $|\calL_{81D}|$ is equal to $-9$, $0$, $9$, $18$, or
$27$. Obviously we can rule out the first possibility.

We first observe that $| \calL_{9D}| = 9$, again by Theorem \ref{thm_main_cubic}. 
By Proposition \ref{prop_count_cf},
\begin{equation}
| \calL_{9D} | = 9 = \frac{1}{2} \Big( H'_{(3)} - H'_{(1)} \Big), \ \ \
| \calL_{81D} | = \frac{1}{2} \Big( H'_{(9)} - H'_{(3)} \Big).
\end{equation}
By assumption, $|H'_{(1)}| = 9$ and so $|H'_{(3)}| = 27$.
Therefore, either $|H'_{(9)}| = 81$ and $| \calL_{81D} | = 27$ or
 $|H'_{(3)}| = 27$ and $| \calL_{81D} | = 0$.

To rule out the former possibility, we again consider the exact sequence 
\eqref{eqn_rcges}, with $F = \Q(\sqrt{D^*})$ replaced by $\Q(\sqrt{D})$ and $\tau$ replaced
by $\tau'$, and take $3$-Sylow subgroups and $(1 + \tau')$-invariants 
(preserving exactness).
The $3$-rank of $(\Z_{\Q(\sqrt{D})}/ (9))^{\times}[1 + \tau]$ is equal to $1$, and so the 
$3$-rank of $\Cl_{(9)}(D)$ is at most $1$ more than that of $\Cl(D)$. In other words, 
$|H'_{(9)}| \leq 3 |H'_{(1)}|$, but we saw previously that $|H'_{(3)}| = 3 |H'_{(1)}|$, so
$|H'_{(9)}| = 3 |H'_{(1)}|$ and $|\calL_{81D}| = 0$ as desired.
\end{proof}

\section{Proof of Theorem \ref{thm_main_cubic}}\label{sec_cm}
Theorem \ref{thm_main_cubic} follows from a more general result of Morra and 
the first author (Theorem 6.1 of \cite{CM} and Theorem 1.6.1 of \cite{M}), 
given above as Theorem \ref{theorem61} in our case where the base field
is $\Q$. To each character of the groups $G_{\mfb}$ we use class field 
theory and Galois theory to uniquely associate a cubic field $E$. Some 
arithmetic involving discriminants, as well as a comparison to our earlier 
counting formulas, proves that these fields $E$ range over all fields in
$\calL_3(D)$. Finally, we apply Theorem \ref{theorem61} to obtain the correct
Euler product for each $E$.

This section borrows from the first author's work in \cite{T_no_ep}, which
established a particular case of Theorem \ref{thm_main_cubic} for an 
application to Shintani zeta functions.

\subsection{Construction of the Fields $E$.}
We refer to the beginning of Section
\ref{sec_cubic_prelim} for our notation and setup.
The contribution of the trivial characters occurring in Theorem \ref{theorem61}
being easy to compute (see below; it has also been computed in \cite{CM}),
we must handle the {\itshape nontrivial} characters.

We relate these characters to cubic fields by means of the following.
\begin{proposition}\label{prop_cubic_bij}
For each $\mfb \in \calB$ there is a bijection between the set of pairs of 
nontrivial characters $(\chi, \overline{\chi})$ of $G_{\mfb}$ and the 
following sets of cubic fields $E$:
\begin{itemize}
\item
If $\mfb = (1)$ or $(\sqrt{-3})$, or if $\mfb = (3)$ and $3 | D$, then all 
$E \in \calL_{D^*}$.
\item
If $\mfb = (3 \sqrt{-3})$, or if $\mfb = (3)$ and $3 \nmid D$, then all 
$E \in \calL_3(D) = \calL_{D^*} \cup \calL_{-27D} $.
\end{itemize}
Moreover, for each prime $p$ with $\big( \frac{-3D}{p} \big) = 1$, write
$p \Z_L = \mfc \tau(\mfc)$ as in \cite{CM}, where $\mfc$ is not necessarily prime, and recall from Theorem \ref{theorem61} and Lemma \ref{lemomchi} the 
definition of $\om_{\chi}(p)$. Under our bijection, the following conditions
are equivalent:
\begin{itemize}
\item
$\chi(p \mfc) = 1$.
\item
The prime $p$ splits completely in $E$.
\item
$\om_{\chi}(p) = 2$.
\end{itemize}
If these conditions are not satisfied then $p$ is inert in $E$ and 
$\om_{\chi}(p) = -1$.
\end{proposition}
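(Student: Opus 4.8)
The plan is to produce the bijection via class field theory, and then to match up the splitting conditions on both sides. First I would recall that by Proposition \ref{prop_g_size} we have $G_{\mfb} \simeq H_{(a)}$ for the appropriate value $a \in \{1, 3, 9\}$, and that $H_{(a)} = \big(\Cl_{(a)}(D^*)/\Cl_{(a)}(D^*)^3\big)[1+\tau]$. A pair of conjugate nontrivial characters $(\chi, \overline{\chi})$ of $H_{(a)}$ corresponds to an index-$3$ subgroup of $\Cl_{(a)}(D^*)$ that is stable under $\tau$ (since $\chi$ and $\overline{\chi}$ have the same kernel, and annihilation by $1+\tau$ is equivalent to $\tau$-stability of the kernel), hence by class field theory to a cyclic cubic extension $M/\Q(\sqrt{D^*})$ whose conductor divides $(a)$ and which is Galois over $\Q$; since $\tau$ acts by inversion on $\Gal(M/\Q(\sqrt{D^*})) \simeq \Z/3\Z$, the group $\Gal(M/\Q)$ is $S_3$, and $M$ contains a cubic field $E$, unique up to isomorphism. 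This produces a well-defined map from pairs $(\chi,\overline{\chi})$ to cubic fields $E$, and one checks it is injective because $E$ (equivalently $M = E(\sqrt{D^*})$) determines the index-$3$ subgroup.

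Next I would pin down the image. The discriminant of $E$ is controlled by the conductor $\mathfrak{f}(M/\Q(\sqrt{D^*}))$, which divides $(a)$; tracking the power of $3$ through the conductor-discriminant formula shows $\Disc(E)$ is $D^*$ when the conductor is prime to $3$ (the cases $\mfb = (1)$, $(\sqrt{-3})$, and $\mfb=(3)$ with $3\mid D$, using the computations of $\mfb \cap \Z_F$ from the proof of Proposition \ref{prop_g_size}) and is $-27D$ (or, when $3\mid D$, again in $\calL_3(D)$) in the remaining cases. Surjectivity then follows by a cardinality count: the number of pairs $(\chi,\overline{\chi})$ is $(|G_{\mfb}|-1)/2 = (|H_{(a)}|-1)/2$, and by Propositions \ref{prop_count_cf} and \ref{prop_no} this equals exactly $|\calL_{D^*}|$ in the first case and $|\calL_3(D)| = |\calL_{D^*}| + |\calL_{-27D}|$ in the second. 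Since the map is injective into a set of the right size, it is a bijection.

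The final and most delicate step is matching the splitting conditions, and this is where I expect the main obstacle to lie: one must chase the prime $p$ (with $\leg{-3D}{p}=1$, so $p$ splits in $\Q(\sqrt{D^*})$ as well, since $D^* = -3D$ up to a cube factor at $3$) through several layers. Writing $p\Z_L = \mfc\tau(\mfc)$ and letting $\mfp := \mfc \cap \Z_{\Q(\sqrt{D^*})}$, the Artin symbol of $p$ in $\Gal(M/\Q(\sqrt{D^*}))$ is $\chi$-trivial exactly when $\mfp$ lies in the index-$3$ subgroup, i.e. when $\chi(\mfp) = 1$; by Lemma \ref{lemomchi} this is the same as $\chi(p\mfc) = 1$ and as $\om_{\chi}(p) = 2$. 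On the other side, $p$ splits completely in $E$ if and only if it splits completely in $M = E(\sqrt{D^*})$, which (since $p$ already splits in $\Q(\sqrt{D^*})$) happens if and only if $\mfp$ splits completely in $M/\Q(\sqrt{D^*})$, i.e. the Artin symbol is trivial — and when it is nontrivial, $\mfp$ is inert in $M/\Q(\sqrt{D^*})$, forcing $p$ to be inert in $E$ (since $E$ is cubic and $\om_E(p)$ can only be $-1$, $0$, or $2$, and $\om_E(p)=0$ is excluded because $\leg{\Disc(E)}{p} = \leg{-3D}{p} = 1$). Assembling these equivalences gives the claimed chain, and the $\om_{\chi}(p) = -1$ case is the contrapositive. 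The care required is in correctly identifying $\mfc$ with a prime of $\Q(\sqrt{D^*})$ under the reflection isomorphism \eqref{eqn_scholz} and its ray-class refinements from Proposition \ref{prop_g_size}, and in verifying that these identifications are compatible with the Artin map; this bookkeeping, rather than any deep input, is the crux.
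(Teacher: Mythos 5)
Your proof is correct, but it follows a genuinely different route from the paper's. You use the isomorphism $G_{\mfb}\simeq H_{(a)}$ of Proposition \ref{prop_g_size} to descend at once to the quadratic resolvent $F=\Q(\sqrt{D^*})$, apply class field theory over $F$ to produce the sextic field $M=E(\sqrt{D^*})$, read off $\Disc(E)$ from the conductor--discriminant formula over $F$, and get surjectivity from Proposition \ref{prop_count_cf} applied to $D^*$ with $c=1$, $3$, or $9$. The paper instead works entirely over the biquadratic field $L$: it extends $\chi$ to all of $\Cl_{\mfb}(L)/\Cl_{\mfb}(L)^3$ by declaring it trivial on the complementary eigenspaces, constructs a degree-$12$ field $E_1$ with $\Gal(E_1/\Q)\simeq S_3\times C_2$, extracts $E$ from the fact that $\tau\tau_2$ centralizes $\Gal(E_1/L)$, and rules out $\Disc(E)=-243D$ by bounding $v_3(\Disc(E_1))\le 18$ via two relative-discriminant computations. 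Your version buys a shorter discriminant analysis and an essentially immediate cardinality count; what it costs is precisely the step you flag at the end and do not carry out: $\om_{\chi}(p)$ and Lemma \ref{lemomchi} are defined in terms of the ideal $\mfc$ of $L$, so replacing $\chi(p\mfc)$ by $\chi(\mfP)$ for $\mfP=\mfc\cap\Z_F$ requires checking that the isomorphism of Proposition \ref{prop_g_size} (induced by the idempotent $(1+\tau\tau_2)/2$, i.e.\ essentially by $\mfc\mapsto\NO_{L/F}(\mfc)$) intertwines the Artin maps of $E_1/L$ and $M/F$; this is standard norm functoriality of the Artin symbol together with $M\cap L=F$, but it is the one nontrivial verification your plan defers. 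The paper avoids it by computing Frobenius elements directly in $E_1/L$, at the cost of a case split on whether $\mfc$ is prime in $L$ or factors as $\mfp\,\tau\tau_2(\mfp)$ --- a split that disappears in your formulation. (One phrasing quibble: mere $\tau$-stability of the kernel is not what forces $\Gal(M/\Q)\simeq S_3$; you need $\tau$ to act by inversion on the quotient, which holds because $\chi$ is a nontrivial character of the $[1+\tau]$-eigenspace --- your following sentence already invokes the correct condition.)
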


\begin{proof}
Define\footnote{We have followed the notation of \cite{CM} where practical, 
but the notations $G'_{\mfb}, \ G''_{\mfb}, \ E, \ E_1$ are 
used for the first time here and do not appear in \cite{CM}.} 
$G'_{\mfb} := \Cl_{\mfb}(L)/\Cl_{\mfb}(L)^3$, so that $G'_{\mfb}$ is
a $3$-torsion group containing $G_{\mfb}$. We have a canonical decomposition 
of $G'_{\mfb}$ into four eigenspaces for the actions of $\tau$ and $\tau_2$, 
and we write 
\begin{equation}\label{eqn_nci}
G'_{\mfb} \simeq G_{\mfb} \times G''_{\mfb},
\end{equation}
where $G''_{\mfb}$ is the direct sum of the three eigenspaces other than 
$G_{\mfb}$. Note that $G''_{\mfb}$ will contain the classes of all principal
ideals generated by rational integers coprime to $3$; any such class in the 
kernel of $T$ will necessarily be in $\Cl_{\mfb}(L)^3$.

For any nontrivial character $\chi$ of $G_{\mfb}$, let $B$ be its kernel, 
which has index $3$. We extend $\chi$ to a character $\chi'$
of $G'_{\mfb}$ by setting $\chi(G''_{\mfb}) = 1$, and write 
$B' := \Ker(\chi') = B \times G''_{\mfb} \subseteq G'_{\mfb}$, so that
$B'$ has index $3$ in $G'_{\mfb}$ and is uniquely determined by $\mfb$ and 
$\chi$. By class field theory, there is a unique abelian extension $E_1/L$ for 
which the Artin map induces an isomorphism $G'_{\mfb}/B' \simeq \Gal(E_1/L)$,
and it must be cyclic cubic, since $G'_{\mfb}/B'$ is. The uniqueness forces 
$E_1$ to be Galois over $\Q$, since the groups $G'_{\mfb}$ and $B'$ are 
preserved by $\tau$ and $\tau_2$ and hence by all of $\Gal(L/\Q)$. For each 
fixed $\mfb$, we obtain a unique such field $E_1$ for each distinct pair of 
characters $\chi, \overline{\chi}$, but we may obtain the same field $E_1$ for 
different values of $\mfb$.

We have $\Gal(E_1/\Q) \simeq S_3 \times C_2$: $\tau$ and $\tau_2 \in \Gal(E_1/L)$ 
both act nontrivially (elementwise) on $G_{\mfb}$ and preserve $B$, and hence 
act nontrivially on $G_{\mfb}/B$ and $G'_{\mfb'}/B'$. Under the Artin map 
this implies that $\tau$ and $\tau_2$ both act nontrivially on $\Gal(E_1/L)$ by 
conjugation, so that $\tau \tau_2$ commutes with $\Gal(E_1/L)$. As $\tau \tau_2$
fixes $\Q(\sqrt{-3D})$, this implies that $E_1$ contains a cubic extension 
$E/\Q$ with quadratic resolvent $\Q(\sqrt{-3D})$, which is unique up to 
isomorphism. Any prime $p$ which splits in $\Q(\sqrt{-3D})$ must either be 
inert or totally split in $E$.
\\
\\
We now prove that the fields $E$ which occur in this construction are those described by the proposition.
Since the quadratic resolvent
of any $E$ is $\Q(\sqrt{-3D})=\Q(\sqrt{D^*})$ with $D^*$ fundamental, we have
$\Disc(E) = r^2D^* $ for some integer $r$ divisible only by $3$ and prime 
divisors of $D^*$.

No prime $\ell > 3$ can divide $r$, because $\ell^3$ cannot divide the 
discriminant of any cubic field. Similarly $2$ cannot divide $r$, 
since if $2 | D$ then $4 | D$, but $16$ cannot divide the discriminant of a 
cubic field. Therefore $r$ must be a power of $3$. Since the $3$-adic 
valuation of a cubic field discriminant is never larger than $5$, $r$ must be 
$1$, $3$, or $9$, and by Proposition \ref{prop_disc_vals} we cannot
have $r=3$ if $3\nmid D^*$, in other words if $3\mid D$. It follows that
$E$ must have discriminant $D^*$, $-27D$, or $-243D$. 

We further claim that $\Disc(E) \neq -243D$. To see this, we apply the 
formula
\begin{equation}\label{eqn_disce}
\Disc(E_1) = \pm \Disc(L)^3 \N_{L / \Q} ( \mathfrak{d}(E_1/L))
\end{equation}
and bound $v_3(\Disc(E_1))$. We see that $v_3(\Disc(L)) = 2$ (by direct 
computation, or by a formula similar to \eqref{eqn_disce}). Moreover, the 
conductor of $E_1/L$ divides $(3 \sqrt{-3})$, and therefore $\mathfrak{d}(E_1/L)$
divides $(27)$. The norm of this ideal is $3^{12}$, and putting all of this 
together we see that $v_3(\Disc(E_1)) \leq 18$.

We also have 
\begin{equation}\label{eqn_disce2}
\Disc(E_1) = \pm \Disc(E)^4 \N_{E / \Q} ( \mathfrak{d}(E_1/E)),
\end{equation}
so that $v_3(\Disc(E)) < \frac{18}{4}$, and in particular 
$v_3(\Disc(E)) \neq 5$ as desired.

Therefore, in all cases $E$ must have discriminant $D^*$ or $-27D$. Moreover,
similar comparisons of \eqref{eqn_disce} and \eqref{eqn_disce2} show that if 
$\mfb \in \{(1), (\sqrt{-3}) \}$, or if $\mfb = (3)$ and $3 | D$, then $E$ 
must have discriminant $D^*$.

We have therefore associated a unique $E$ to each pair 
$(\chi, \overline{\chi})$ as in the proposition, and it follows from
Propositions \ref{prop_count_cf} and \ref{prop_g_size} that we obtain all such
$E$ in this manner.
\\
\\
Finally, we prove the second part of the proposition. The statements
concerning $\om_{\chi}(p)$ and $\chi(p\mfc)$ follow from Lemma
\ref{lemomchi}. We show the equivalence of the first and second statement.

We extend $\chi$ to a character $\chi'$ of $G'_{\mfb}$ as defined previously,
so that $\chi'(p)$ is defined and equal to 1. Therefore, $\chi(p \mfc) = 1$ if
and only if $\chi'(\mfc) = 1$, and we must show that this is true if and only 
if $p$ splits completely in $E$. 

Suppose first that $\mfc$ is prime in $\Z_L$. By class field theory, 
$\chi'(\mfc) = 1$ if and only if $\mfc$ splits completely in $E_1/L$, which 
happens if and only if $(p)$ splits into six ideals in $E_1$, which happens if
and only if $p$ splits completely in $E$.

Suppose now that $\mfc = \mfp \tau \tau_2(\mfp)$ in $L$. 
Since $\mfp$ and $\tau \tau_2(\mfp)$ represent the same element of
$G'_{\mfb}/B'$ they have the same Frobenius element in $E_1/L$, hence
since $\chi'$ is a cubic character it follows that $\chi'(\mfc)=1$ if and only
if $\chi'(\mfp)=1$. By class field theory this is true if and only if $\mfp$ 
splits completely in $E_1/L$, in which case $(p)$ splits into twelve ideals in
$E_1$; for this it is necessary and sufficient that $p$ split completely in 
$E$.\end{proof}

\subsection{Putting it all Together}
Applying Proposition \ref{prop_cubic_bij} we may regard the formula of Theorem
\ref{theorem61} as a sum over cubic fields. 
We now divide into cases $3 \nmid D$ and $D \equiv 3, 6 \ (\textmod \ 9)$.
The main terms, corresponding to the trivial characters of $G_{\mfb}$, 
contribute
\begin{equation}
\dfrac{3}{2c_D}\sum_{\mfb\in\calB}\om_1(3)A_{\mfb}(s)=\dfrac{1}{2c_D}M_1(s)\prod_{\leg{-3D}{p}=1}\left(1+\dfrac{2}{p^s}\right)
\end{equation}
of Equation \ref{eqn_main_cubic}. These terms have also been given in
\cite{CM} and \cite{M}.

It remains to handle the contribution of the nontrivial characters.

Assume first that $3 \nmid D$. In this case by Theorem \ref{theorem61} 
\begin{multline}
c_D\Phi_D(s)=\frac{3}{2} \cdot \Bigg[
3^{-2s}\sum_{\chi \in \widehat{G_{(1)}}} F((1), \chi, s) - 3^{-2s-1} \sum_{\chi \in \widehat{G_{(3)}}} F((3), \chi, s)\\
+ \frac{1}{3}\sum_{\chi \in \widehat{G_{(3 \sqrt{-3})}}} F((3\sqrt{-3}), \chi, s)) \Bigg]\;,
\end{multline}
and the calculations above show that 
\begin{equation}
F(\mfb, \chi, s) = \prod_{ \big( \frac{-3D}{p} \big) = 1} \bigg(1 + \frac{\om_{E}(p)}{p^s} \bigg)\;,
\end{equation}
where $E$ is the cubic field associated to $\chi$. Each field $E$ of 
discriminant $D^*$ contributes twice (each character yields the same field as 
its inverse) to each of the three sums above, and each field of discriminant 
$-27D$ contributes twice to each of the last two. We obtain a contribution
of $1 + 2 \cdot 9^{-s}$ for each field of discriminant 
$D^* = -3D$, and of $1 - 9^{-s}$ for each field of discriminant 
$-27D$. This is the assertion of the theorem.

Assume now that $D \equiv 3 \ (\textmod \ 9)$. Then
\begin{multline}
c_D\Phi_D(s)=\frac{3}{2} \cdot \Bigg[
3^{-3s/2}\sum_{\chi \in \widehat{G_{(\sqrt{-3})}}} F((\sqrt{-3}), \chi, s) + \big( 3^{-s} - 3^{-3s/2} \big) \sum_{\chi \in \widehat{G_{(3)}}} F((3), \chi, s)\\
 + \frac{1}{3} \big(1 - 3^{-s} \big) \sum_{\chi \in \widehat{G_{(3 \sqrt{-3})}}} F((3\sqrt{-3}), \chi, s)) \Bigg]\;.
\end{multline}
The first two sums are over fields of discriminant $D^* = -D/3$, and the last
sum also includes fields of discriminant $-27D$. We obtain a contribution of 
$1 + 2 \cdot 3^{-s}$ for each field of discriminant $D^*$, and
of $1 - 3^{-s}$ for each field of discriminant $-27D$, in accordance with the
theorem.

Finally, assume that that $D \equiv 6 \ (\textmod \ 9)$. Then
\begin{multline}
c_D\Phi_D(s)=\frac{3}{2} \cdot \Bigg[
3^{-2s} \sum_{\chi \in \widehat{G_{(1)}}} \om_{\chi}(3) F((1), \chi, s) + 
3^{-3s/2}\sum_{\chi \in \widehat{G_{(\sqrt{-3})}}} F((\sqrt{-3}), \chi, s) + \\ 
\big( 3^{-s} - 3^{-3s/2} \big) \sum_{\chi \in \widehat{G_{(3)}}} F((3), \chi, s) + 
\frac{1}{3} \big(1 - 3^{-s} \big) \sum_{\chi \in \widehat{G_{(3 \sqrt{-3})}}} F((3\sqrt{-3}), \chi, s)) \Bigg].
\end{multline}

The first three sums are over fields of discriminant $D^* = -D/3$, and the 
last sum also includes fields of discriminant $-27D$. 
For the same reasons as discussed for $p \neq 3$ we have 
$\om_{\chi}(3)=\om_{E}(3)$, where $E$ is the cubic field associated to $\chi$.

We thus obtain a contribution of 
$1 + 2 \cdot 3^{-s} + 3 \om_{E}(3) \cdot 3^{-2s}$
for each field of discriminant $D^*$, and of $1 - 3^{-s}$ for each field of 
discriminant $-27D$, in accordance with the theorem.

\section{Numerical Examples}\label{sec_examples}

We present some numerical examples of our main results.

Suppose first that $D < 0$.

If $(\rk_3(D),\rk_3(D^*))=(0,0)$, then there are no cubic fields of
discriminant $D^*$ or $-27D$.
\begin{equation}
\Phi_{-4}(s)=\dfrac{1}{2}\left(1+\dfrac{2}{3^{2s}}\right)\prod_{\leg{12}{p}=1}\left(1+\dfrac{2}{p^s}\right).
\end{equation}

If $(\rk_3(D),\rk_3(D^*))=(1,0)$, there are no cubic fields of discriminant 
$D^*$ and a unique cubic field of discriminant $-27D$.

\begin{equation}
\Phi_{-255}(s)
=\dfrac{1}{2}\left(1+\dfrac{2}{3^s}+\dfrac{6}{3^{2s}}\right)\prod_{\leg{6885}{p}=1}\left(1+\dfrac{2}{p^s}\right)\\
+\left(1-\dfrac{1}{3^s}\right)\prod_p\left(1+\dfrac{\om_{L6885}(p)}{p^s}\right)\;,\end{equation}
where $L6885$ is the cubic field determined by $x^3 - 12x - 1 = 0$.

If $(\rk_3(D),\rk_3(D^*))=(1,1)$, there is a unique cubic field of 
discriminant $D^*$ and no cubic fields of discriminant $-27D$.

\begin{equation}
\Phi_{-107}(s)=\dfrac{1}{2}\left(1+\dfrac{2}{3^{2s}}\right)\prod_{\leg{321}{p}=1}\left(1+\dfrac{2}{p^s}\right)\\
+\left(1+\dfrac{2}{3^{2s}}\right)\prod_p\left(1+\dfrac{\om_{L321}(p)}{p^s}\right),
\end{equation}
where $L321$ is the field determined  by $x^3-x^2-4x+1$.

If $(\rk_3(D),\rk_3(D^*))=(2,1)$, there is a unique cubic field of 
discriminant $D^*$ and three cubic fields of discriminant $-27D$.

\begin{align*}
\Phi_{-8751}(s)=&\dfrac{1}{2}\left(1+\dfrac{2}{3^s}+\dfrac{6}{3^{2s}}\right)\prod_{\leg{26253}{p}=1}\left(1+\dfrac{2}{p^s}\right)\\
&
+\left(1+\dfrac{2}{3^s}-\dfrac{3}{3^{2s}}\right)\prod_{p\ne3}\left(1+\dfrac{\om_{L2917}(p)}{p^s}\right)
+\left(1-\dfrac{1}{3^s}\right)\sum_{1\le i\le 3}\prod_p\left(1+\dfrac{\om_{L236277_i}(p)}{p^s}\right),
\end{align*}
where the four fields above are defined as follows:

\bigskip
\centerline{
\begin{tabular}{|c||c|c|}
\hline
Cubic field & Discriminant & Defining polynomial\\
\hline\hline
\hline
$L2917$&$8751/3$&$x^3-x^2-13x+20$\\
\hline
$L236277_1$&$3^3\cdot8751$&$x^3-138x+413$\\
\hline
$L236277_2$&$3^3\cdot8751$&$x^3-129x-532$\\
\hline
$L236277_3$&$3^3\cdot8751$&$x^3-90x-171$\\
\hline
\end{tabular}}
\bigskip
If $(\rk_3(D),\rk_3(D^*))=(2,2)$,
there are four cubic fields of discriminant $D^*$ and none of
discriminant $-27D$. Recall from Proposition \ref{case22} above that
if $D\equiv6\pmod9$, $3$ is totally split in one of them and inert in the
other three, so  one of the cubic fields of discriminant $D^*$, which we 
include first, is distinguished by the fact that $3$ is totally split.

\begin{equation}
\Phi_{-34603}(s) = \dfrac{1}{2}\left(1+\dfrac{2}{3^{2s}}\right)\prod_{\leg{103809}{p}=1}\left(1+\dfrac{2}{p^s}\right)
+\left(1+\dfrac{2}{3^{2s}}\right)\sum_{1\le i\le 4}\prod_p\left(1+\dfrac{\om_{L103809_i}(p)}{p^s}\right);
\end{equation}
\centerline{
\begin{tabular}{|c||c|c|}
\hline
Cubic field & Discriminant & Defining polynomial\\
\hline\hline
$L103809_1$&$3\cdot34603$&$x^3-x^2-84x+261$\\
\hline
$L103809_2$&$3\cdot34603$&$x^3-x^2-64x+91$\\
\hline
$L103809_3$&$3\cdot34603$&$x^3-x^2-92x-204$\\
\hline
$L103809_4$&$3\cdot34603$&$x^3-x^2-62x-15$\\
\hline
\end{tabular}}
\bigskip

The case $D > 1$ is very similar, so we will only give one example. If
(for example) $(\rk_3(D),\rk_3(D^*))=(1,1)$ there is a unique cubic field of 
discriminant $D^*$ and three cubic fields of discriminant $-27D$.

\begin{align*}
3\Phi_{321}(s)&=\dfrac{1}{2}\left(1+\dfrac{2}{3^s}+\dfrac{6}{3^{2s}}\right)\prod_{\leg{-963}{p}=1}\left(1+\dfrac{2}{p^s}\right)\\
&\phantom{=}+\left(1+\dfrac{2}{3^s}-\dfrac{3}{3^{2s}}\right)\prod_{p\ne3}\left(1+\dfrac{\om_{LM107}(p)}{p^s}\right)
+\left(1-\dfrac{1}{3^s}\right)\sum_{1\le i\le 3}\prod_p\left(1+\dfrac{\om_{LM8667_i}}{p^s}\right),
\end{align*}
where the indicated cubic fields are given as follows:
\bigskip

\centerline{
\begin{tabular}{|c||c|c|}
\hline
Cubic field & Discriminant & Defining polynomial\\
\hline\hline
$LM107$&$-321/3$&$x^3-x^2+3x-2$\\
\hline
$LM8667_1$&$-3^3\cdot321$&$x^3+18x-45$\\
\hline
$LM8667_2$&$-3^3\cdot321$&$x^3+6x-17$\\
\hline
$LM8667_3$&$-3^3\cdot321$&$x^3+15x-28$\\
\hline
\end{tabular}}
\bigskip

\section{Counting $S_3$-sextic fields of bounded discriminant\protect\footnote{We thank the anonymous referee of \cite{TT6} for
(somewhat indirectly) suggesting this application.}}\label{sec_computations}

Theorem \ref{thm_main_cubic} naturally lends itself to counting $S_3$-sextic fields, that is, fields which are
Galois over $\Q$ with Galois group $S_3$.
For any such $\wK$ with cubic and quadratic subfields $K$ and $k$ respectively, we have the formula 
\begin{equation}
\Disc(\wK) = \Disc(K)^2 \Disc(k) = \Disc(k)^3 f(K)^4.
\end{equation}
Let $N^{\pm}(X; S_3)$ denote the number of $S_3$-sextic fields $\wK$ with $0 < \pm \Disc(\wK) < X$. Then Theorem \ref{thm_main_cubic}
may be used to compute $N^{\pm}(X; S_3)$: iterate over fundamental discriminants $D$
with $0 < \pm D < X^{1/3}$; compute the Dirichlet series $\Phi_D(s)$ to $f(K) < (X/D^3)^{1/4}$ and evaluate its partial sums; finally,
sum the results.

We implemented this algorithm in PARI/GP \cite{pari}, which can easily handle
the various quantities occurring in \eqref{eqn_main_cubic}. For a list of cubic fields we relied on Belabas's \url{cubic} program \cite{Bel}.

We used the GP calculator, which has the advantage of simplicity. The 
disadvantage of this approach is that we were 
obliged to read the output of \url{cubic} from disk, limiting us by available disk space. 
One could probably
compute $N^{\pm}(X; S_3)$ to at least $X = 10^{27}$ by directly implementing \eqref{eqn_main_cubic} within Belabas's code; alternatively,
Belabas has informed us that an implementation of \url{cubic} within PARI/GP may be forthcoming. In any case we leave further
computations for later.

This approach dictated a slight variant of the algorithm described above:
\begin{itemize}
\item We parsed Belabas's output into files readable by PARI/GP, using a Java program written for this purpose.
\item Given a table of all cubic fields $K$ with $0 < \pm \Disc(K) < Y$ for some $Y$, it must contain all fields in
$\LL_3(D)$ with $0 < \mp \Disc(K) < Y/27$, allowing us to choose any $X \leq 3^{-9} Y^3$.
\item Processing each cubic field in turn, and ignoring those not in $\calL_3(D)$ for some fundamental discriminant $D$
with $|D| \leq X^{1/3}$, we computed the associated Dirichlet series to a length of $\lfloor (X/D^3)^{1/4} \rfloor$, and its partial sum
(less the $\frac{1}{2}$ term for $f(K) = 1$), and maintained a running total of the results.
\item Finally, we added the main term of \eqref{eqn_main_cubic} for each $D$ with $0 < \pm D < X^{1/3}$.
\end{itemize}
Our algorithm would also allow for efficient computation of the $\Phi_D(s)$, given a {\itshape sorted} version of Belabas's output.

The implementation posed no particular difficulties, and our PARI/GP 
source code is available on the second author's website.\footnote{To replicate
our data for large $X$ one must also install and run Belabas's \url{cubic} program, and as well as our parser.
With our source code we have also made available a modestly sized table
of cubic fields, with which our PARI/GP program suffices alone to replicate our data for smaller values of $X$.} On a 2.1 GHz MacBook 
our computation
took approximately 3 hours for negative discriminants $< 3 \cdot 10^{23}$ and 10 hours for positive discriminants $< 10^{23}$;
it is to be expected from the shape of \eqref{eqn_main_cubic} that counts of negative discriminants may be computed more efficiently
than positive, even though there are more of them.

This brings
us to our data:
\\
\\
\begin{center}
\begin{tabular}{c | c | cccc}
$X$ & $N_6^+(X; S_3)$ \\ \hline
$10^{12}$ & 690\\
$10^{13}$ & 1650 \\ 
$10^{14}$& 3848 \\
$10^{15}$& 8867 \\
$10^{16}$& 20062 \\
$10^{17}$& 45054 \\
$10^{18}$& 100335 \\
$10^{19}$& 222939 \\
$10^{20}$& 492335 \\
$10^{21}$& 1083761 \\
$10^{22}$& 2378358 \\
$10^{23}$& 5207310 \\
- & - \\

\end{tabular}
\ \ \ \ \ \ \ \
\begin{tabular}{c | c | ccc}
$X$ &$N_6^-(X; S_3)$\\ \hline
$10^{12}$ & 2809\\
$10^{13}$ & 6315\\
$10^{14}$& 14121\\
$10^{15}$& 31276\\
$10^{16}$& 68972\\
$10^{17}$& 151877\\
$10^{18}$& 333398\\
$10^{19}$& 729572\\
$10^{20}$& 1592941\\
$10^{21}$& 3470007\\
$10^{22}$& 7550171\\
$10^{23}$& 16399890\\
$3 \cdot 10^{23}$& 23738460\\

\end{tabular}
\end{center}
\vskip 0.2in
This data may be compared to known theoretical results on $N^{\pm}(X; S_3)$.
It was proved by
Belabas-Fouvry \cite{BF} and Bhargava-Wood \cite{BW} (independently) $N^{\pm}(X; S_3) \sim B^{\pm} X^{1/3}$
for explicit constants $B^{\pm}$ (with $B^- = 3 B^+$), and in \cite{TT6} Taniguchi and the second author obtained a
power saving error term.

The authors of \cite{TT6} also computed tables of $N^{\pm}(X; S_3)$ up to $X = 10^{18}$ using a different method, allowing us
to double-check our work here.
Based on this data and on \cite{R, BST, TT}, 
they guessed the existence of a secondary term of order $X^{5/18}$, and found that the data further suggested the existence
of additional, unexplained lower order terms. For more on this we
defer to \cite{TT6}.


\begin{thebibliography}{99}

\bibitem{AF} J. Armitage and A. Fr\"ohlich,
\emph{Class numbers and unit signatures},
Mathematika \textbf{14} (1967), 94--98.


\bibitem{Bel} K. Belabas,
\emph{A fast algorithm to compute cubic fields},
Math. Comp. \textbf{66} (1997), no. 219, 1213--1237; accompanying software 
available at
\url{http://www.math.u-bordeaux1.fr/~belabas/research/software/cubic-1.2.tgz}.


\bibitem{BF}
K.~Belabas and E.~Fouvry.
\newblock Discriminants cubiques et progressions arithm\'etiques,.
\newblock {\em Int. J. of Number Theory}, 6:1491--1529, 2010.

\bibitem{BW}
M.~Bhargava and M.~Wood.
\newblock The density of discriminants of ${S}_3$-sextic number fields.
\newblock {\em Proc. Amer. Math. Soc.}, 136:1581--1587, 2008.

\bibitem{B4} M. Bhargava, 
\emph{The density of discriminants of quartic rings and fields},
Ann. of Math. (2) \textbf{162} (2005), no. 2, 1031--1063. 

\bibitem{B_conj} M. Bhargava,
\emph{
Mass formulae for extensions of local fields, and conjectures on the density of number field discriminants},
Int. Math. Res. Not. (2007), no. 17, Art. ID rnm052, 20 pp. 

\bibitem{B_icm} M. Bhargava,
\emph{
Higher composition laws and applications},
Proceedings of the International Congress of Mathematicians, Vol. II, 271Ð294, 
Eur. Math. Soc., Z\"urich, 2006.

\bibitem{B5} M. Bhargava,
\emph{The density of discriminants of quintic
rings and fields}, Ann. Math. (2) \textbf{172} (2010), 
no. 3, 1559-1591.

\bibitem{BST} M. Bhargava, A. Shankar, and J. Tsimerman,
\emph{On the Davenport-Heilbronn theorem and second order terms}, preprint;
available at \url{http://arxiv.org/abs/1005.0672}

\bibitem{CDO} H. Cohen, F. Diaz y Diaz, and M. Olivier,
\emph{Counting discriminants of number fields},
J. Th\'eor. Nombres Bordeaux \textbf{18} (2006), no. 3, 573--593.

\bibitem{CM} H. Cohen and A. Morra,
\emph{Counting cubic extensions with given quadratic resolvent}, J. Algebra \textbf{325} (2011), 461--478. 
(Theorem numbers refer to the published version, which are different than in the arXiv version.)

\bibitem{Cohn} H. Cohn,
\emph{The density of abelian cubic fields}, 
Proc. Amer. Math. Soc. \textbf{5} (1954), 476-477.

\bibitem{DH} H. Davenport and H. Heilbronn,
\emph{On the density of discriminants of cubic fields. II}, 
Proc. Roy. Soc. London Ser. A \textbf{322}  (1971), no. 1551, 405--420. 

\bibitem{Has} H.~Hasse, {\it Arithmetische Theorie der kubischen Zahlk\"orper
auf klassen\-k\"orper\-theore\-tischer Grundlage\/}, Math.~Zeit.~{\bf 31}
(1930), 565--582, and Math.~Abhandlungen, Walter de Gruyter (1975), 423--440.

\bibitem{M} A. Morra,
\emph{Comptage asymptotique et algorithmique d'extensions cubiques relatives} (in English),
thesis, Universit\'e Bordeaux I, 2009. Available online at
\url{http://perso.univ-rennes1.fr/anna.morra/these.pdf}.

\bibitem{N} J. Nakagawa, 
\emph{On the relations among the class numbers of binary cubic forms},
Invent. Math. \textbf{134} (1998), no. 1, 101--138. 

\bibitem{Ohno} Y. Ohno, 
\emph{A conjecture on coincidence among the zeta functions associated with the space of binary cubic forms},
Amer. J. Math. \textbf{119} (1997), no. 5, 1083--1094. 

\bibitem{pari} 
PARI/GP, version {\tt 2.5.1}, Bordeaux, 2011, available from \url{http://pari.math.u-bordeaux.fr/}

\bibitem{R} D. Roberts,
\emph{Density of cubic field discriminants},
Math. Comp. \textbf{70}  (2001),  no. 236, 1699--1705.

\bibitem{TT} T. Taniguchi and F. Thorne, \emph{Secondary terms in counting functions for cubic fields}, submitted;
available at \url{http://arxiv.org/abs/1102.2914}

\bibitem{TT6} T. Taniguchi and F. Thorne, \emph{An error estimate for counting $S_3$-sextic number fields}, submitted;
available at \url{http://arxiv.org/abs/1208.2170}.

\bibitem{T_no_ep} F. Thorne,
\emph{Shintani's zeta function is not a finite sum of Euler products}, submitted;
preprint available at 
\url{http://arxiv.org/pdf/1112.1397}.

\bibitem{T_four} F. Thorne,
\emph{Four perspectives on a curious secondary term}, submitted;
preprint available at
\url{http://arxiv.org/abs/1202.3965}.

\bibitem{Wa} L. Washington,
\emph{Introduction to cyclotomic fields} (second edition), 
Springer-Verlag, New York, 1997. 

\end{thebibliography}
\end{document}